\renewenvironment{thebibliography}[1]{
  \begin{oldthebibliography}{#1}
    \setlength{\itemsep}{0em}
    \setlength{\parskip}{-1em}
}
{
  \end{oldthebibliography}
}
\theoremstyle{definition}
\newtheorem{example}{Example}[section]
\newtheorem{remark}{Remark}[section]
\newtheorem{defn}{Definition}[section]
\newtheorem{prop}{Proposition}[section]
\newtheorem{cor}{Corollary}[section]
\newtheorem{thm}{Theorem}[section]
\newtheorem{lemma}{Lemma}[section]
\title{Efficient computation of complementary set partitions, with applications to an extension and estimation of generalized cumulants}
\author{Elvira Di Nardo \thanks{Department of Mathematics, University of Turin, Turin, Italy, email: elvira.dinardo@unito.it (corresponding author)}, Giuseppe Guarino \thanks{Local Health Authority of Potenza, Potenza, Italy, email: giuseppe.guarino@webmail.it}}
\date{} 
\begin{document}
\maketitle
\begin{abstract}
This paper develops new combinatorial approaches to analyze and compute special set partitions, called complementary set partitions, which are fundamental in the study of generalized cumulants. Moving away from traditional graph-based and algebraic methods, a simple and fast algorithm is proposed to list complementary set partitions  based on two-block partitions, making the computation more accessible and implementable also in non-symbolic programming languages like R. Computational comparisons in {\tt Maple} demonstrate the efficiency of the proposal.
Additionally  the notion of generalized cumulant is extended using multiset subdivisions and multi-index partitions  to include scenarios with repeated variables and to address more sophisticated dependence structures.  A formula is provided that expresses generalized multivariate cumulants as linear combinations of multivariate cumulants, weighted by coefficients that admit a natural combinatorial interpretation. Finally, the introduction of dummy variables and specialized multi-index partitions enables an efficient procedure for estimating generalized multivariate cumulants with a substantial reduction in data power sums involved.
\end{abstract}
\begin{keywords}
Complementary set partition; set partition lattice; generalized cumulant; multi-index partition; multiset subdivision; multivariate polykay
\end{keywords}
\section{Introduction}
Generalized cumulants were introduced by McCullagh in \cite{mccullagh1984tensor} and permits to compute joint cumulants of polynomials in a random sample, such as linear, quadratic forms and so on \cite{Kollo}. To give a simple example, suppose we need the covariance of $\sum_i a_i X_i$ and $\sum_{j,k} a_{jk} X_j X_k,$ where  ${\bm X}=(X_1, \ldots, X_n)$ is a  vector of random variables (r.v.'s) not necessarily independent and identically distributed (i.i.d.). 
By the multilinear property of covariance we have
$$
{\rm cov}\bigg(\sum_i a_i X_i, \sum_{j,k} a_{jk} X_j X_k \bigg) =\sum_{i,k,j} 
a_i a_{jk} \kappa_{i,jk} 
$$
where $\kappa_{i,jk} = {\rm cov}(X_i, X_j X_k)$ is the generalized cumulant of $X_i$ and $X_j X_k.$ These calculations find applications in various fields, for example sample cumulants, Edgeworth series, conditional cumulants, Bartlett's identities, see  \cite{mccullagh1984tensor} and \cite{stafford1994automating}.  Many other examples can be found in \cite{andrews2000symbolic}, \cite{barndorff1989asymptotic} and \cite{mccullagh2018tensor}, to which we refer the reader interested in such applications.

Although generalized cumulants are very useful, their practical application is constrained by considerable computational complexity. This stems from their dependence on a specific class of set partitions, known as complementary set partitions. Manual listing of these partitions becomes increasingly difficult as the number of indexes increases, even if they are of moderate size. The development of automatic tools for these calculations would therefore be of great benefit.

To the best of our knowledge, there are currently no available procedures for generating complementary set partitions in non-symbolic programming environments, like {\tt R} \cite{Rmanual}.  Computer algebra tools have been implemented by Wang, Andrews and Stafford \cite{stafford1994symbolic}, McCullagh and Wilks\cite{mccullagh1988complementary}, and  Kendall \cite{kendall1994computer}.  Nevertheless, due to the inefficiency of the resulting computation times, these methods have not been adopted in practice. Consequently, the tables provided in \cite{mccullagh2018tensor} continue to serve as the standard reference, even though they are incomplete, as a full listing would require an impractical number of pages.  As a result, any missing information must be manually computed, utilizing the fundamental properties of complementary set partition (see Section 2 for a short review). In \cite{stafford1994automating}, Stafford suggested a strategic change by avoiding the combinatorial complexity of the problem. Instead of initially computing complementary set partitions and then generalized cumulants, he reversed the process, proposing that generalized cumulants be calculated first as a way to recover complementary set partitions. This strategy was implemented \footnote{J. E. Stafford kindly provided us with routines in Mathematica.} in {\tt Mathematica 3.0} \cite{ram1996}
(further details can be found in \cite{andrews2000symbolic}). Despite relying exclusively on algebraic calculations, the automatic procedure’s implementation in non-open source software such as {\tt Mathematica} prevented its widespread adoption. Moreover, this approach would be inefficient when applied to other formulas involving complementary set partitions, like those for polykay cumulants. Consequently, the tables in \cite{mccullagh2018tensor} are still the primary reference tool for complementary set partitions and related calculations  \cite{mesters2024non}. 

The purpose of this paper is twofold, both computational and theoretical. 
As suggested in \cite{mccullagh2018tensor}, complementary set partitions can be computed using connected graphs. Here, we propose an alternative and novel combinatorial method. For a partition with $m$ blocks, we only use the two-block partitions of the set $[m]=\{1, \ldots,m\}$  to recover all partitions that are not complementary to the given one. Comparative analysis of computational times with both connected graph-based algorithms and other strategies demonstrates the efficiency of our approach, which is simple enough to be implemented in non-symbolic, open-source software\footnote{All routines are available upon request} like {\tt R}. From a theoretical perspective, the main contribution of this paper is the extension of the notion of generalized cumulants to r.v.'s indexed by multiset subdivisions \cite{di2008unifying}. Generalized cumulants are usually expressed in terms of joint cumulants products indexed by partition blocks, with r.v.'s having distinct subscripts. By extending the indexing to multiset subdivisions, we allow repetitions and thus r.v.'s powers. In this framework, we define generalized multivariate cumulants as intermediate quantities between multivariate cumulants and moments, and we provide a closed-form expression to represent these quantities in terms of products of multivariate cumulants. The key tool relies on defining a labelling rule to distinguish repeating r.v.'s   and on characterizing complementary set partitions using appropriate vector subspaces. These subspaces, which are generated by binary vectors encoding the partition blocks, allow for an efficient transformation of the auxiliary distinct r.v.'s back to the original set. Furthermore, we provide a combinatorial interpretation of the coefficients multiplying the products of multivariate cumulants in the expression of generalized multivariate cumulants, which are all 1 in the classical case of generalized cumulants

As an application, an unbiased estimator for generalized multivariate cumulants is proposed, based on multivariate polykays \cite{direview}. Traditionally, computing multivariate polykays involves set partitions and computer algebra tools \cite{di2009new}. By employing multi-index partitions, these formulas have been implemented efficiently even in non-symbolic programming environments like {\tt R} \cite{di2022kstatistics}. In this context, the use of the labeling rule and dummy variables further speed up the computation of these estimators by reducing the number of multivariate polykays involved. The main idea is to reduce the estimation of a generalized multivariate cumulant to the estimation of a generalized cumulant of the same order, but involving fewer power sum symmetric functions. In turn, the estimation of a generalized cumulant is further simplified to the estimation of a joint cumulant, thus decreasing overall computational time.

The paper is organized as follows. Section 2 provides an overview of complementary set partitions and their properties.  Section 3 briefly discusses the existing literature on methods for generating complementary set partitions and introduces the new combinatorial approach proposed in this paper.  In Section 4, generalized multivariate cumulants and labeling rules are introduced. In the same section we define the  $\vec{{\bm 1}}_n$-partitions and show how the subspaces spanned by their columns  characterize complementary set partitions.  A closed-form formula is then provided to express generalized multivariate cumulants in terms of products of multivariate cumulants, using an appropriate transformation of multi-index partitions  into $\vec{{\bm 1}}_n$-partitions. These products are multiplied by specific coefficients, which are given a combinatorial interpretation.  Section 5 focuses on practical applications. We provide computational comparisons between the combinatorial method introduced here and other existing methods for generating complementary set partitions. Next, we explain how this same approach can be used for the efficient computation of generalized multivariate cumulants. We then illustrate the application of the labeling rule and suitable dummy variables to achieve efficient computation of generalized multivariate cumulant estimators. Conclusions and some open problems end the paper.
\section{Background on complementary set partitions}
Recall that a partition $\pi$ of $[n]=\{1,\ldots,n\}$  is a set with $m \leq n$ nonempty subsets of $[n],$ named blocks of the partition, such that every integer is in exactly one subset. The set of partitions of $[n]$ is usually denoted by $\Pi_n,$ to which we refer in the following unless otherwise specified, and  the subset of partitions in $m$ blocks is denoted by $\Pi_{n,m} \subseteq \Pi_n.$ To specify the blocks  $\{B_i\}$ of $\pi \in \Pi_{n,m},$ we write  $\pi=B_1 | \ldots |B_m.$ In particular the trivial partition is denoted by ${\mathbf 1}_n=1\ldots n$ and the singleton partition is denoted by  ${\mathbf 0}_n =1|\ldots|n.$ The refinement is the natural partial order among set partitions, that is $\pi \leq \tilde{\pi}$ ($\pi$ refines $\tilde{\pi}$)  if $\pi = \tilde{\pi}$ or every block of $\pi$ is a subset of some block of $\tilde{\pi}.$ In particular, the least upper bound $\pi \lor \tilde{\pi}$ is the finest partition which is refined by both $\pi$ and $\tilde{\pi}.$ 
\begin{defn}  \cite{mccullagh1988complementary}
Two set partitions  $\pi$ and $\tilde{\pi}$ are said to be complementary if their least upper bound  is the trivial partition ${\mathbf 1}_n,$ that is $\pi \lor \tilde{\pi} = {\mathbf 1}_n.$ 
\end{defn}
\vspace*{-0.6cm}
\subparagraph{Swapping property.}
Let us begin by recalling the canonical representation ({\it cr1}) of a partition 
$\pi \in \Pi_{n,m}$ as described in \cite{mccullagh1988complementary}: within each block, elements appear in increasing order; the blocks themselves are arranged in decreasing order of cardinality, and blocks of the same size are ordered lexicographically. In such a way, the block cardinalities of  $\pi \in \Pi_{n,m}$  gives a partition\footnote{ Recall that  $\lambda \vdash n$ denotes a partition of an integer $n,$ that is a sequence $\lambda = (\lambda_1, \lambda_2, \ldots , \lambda_m),$ with $m \leq n,$ where $\lambda_j$ are decreasing  integers, named parts of $\lambda$ and such that $\sum_{j=1}^m \lambda_j = n.$ } $\lambda \vdash n$ in $m$ parts. Note that if $\pi, \tilde{\pi} \in \Pi_{n,m}$ share the same integer partition  
of block cardinalities, then $\pi$ is obtained from $\tilde{\pi}$ by swapping some integers among blocks, that is there exists a permutation that takes the canonical representation of  $\pi$  into the canonical representation of $\tilde{\pi}$.  Thus, the complementary set partitions of $\tilde{\pi}$ can be recovered from those of $\pi$ swapping the corresponding interchanged integers in the blocks (swapping property).
\begin{example} \label{swapex}
The complementary set partitions of $\pi=1|234$ are
$12|3|4, 13|2|4, 14|2|3,$ $123|4, 124|3, 12|3\,4, 134|2, 13|24, 14|23, 1234.$ The complementary set partitions of $\tilde{\pi}=123|4$ are the complementary set partitions of $\pi$ with $1$ swapped with $4,$ that is
$1|2 4|3, 1|2|3 4,$ $14|2|3, 1|234, 124|3, 13|24, 134|2,12|34, 14|23, 1234.$
\end{example}
\vspace*{-0.4cm}
\subparagraph{Intersection matrices.} Complementary set partitions can be grouped into equivalence classes using intersection matrices.
\begin{defn}\label{intmat}
If $k,l \leq n, \pi_1=B_1|  \ldots, B_k \in \Pi_{n,k}$ and $\pi_2=C_1|  \ldots| C_l \in \Pi_{n,l},$  the intersection matrix $M_{[k \times l]},$ denoted by $\pi_1 \cap \pi_2,$ is such that its $(i,j)$-th element  $M_{ij}$ is the cardinality of  $B_i \cap C_j,$ that is  $M_{ij}= |B_i \cap C_j|.$ 
\end{defn}
Given $\pi$, the partitions $\pi_1$ and $\pi_2$ are equivalent if they have equal intersection matrices, that is if $\pi \cap \pi_1 = \pi \cap \pi_2,$ after suitably permuting the blocks of $\pi_1$ and $\pi_2$ or the columns of $\pi \cap \pi_1$ and $\pi \cap \pi_2.$ 
\vspace*{-0.3cm}
\subparagraph{Generalized cumulants.} 
Suppose to have ${\bm X}=(X_1, X_2, \ldots)$ r.v.'s not necessarily i.i.d. 
\begin{defn} \label{defgencum1} \cite{brillinger2001time}
Let ${\mathfrak I}$ be a set of $n$ subscripts chosen among the ones of $X_i$'s and $\pi = B_1| \ldots |B_m$ a partition in $m$ blocks of ${\mathfrak I}.$ Generalized cumulants are the joint cumulants
\begin{equation}
\mathfrak{K}(\pi) = \mathfrak{K}^{B_1| \ldots| B_m} = {\mathcal K} \left( \prod_{i \in B_1} X_i, \ldots,  \prod_{i \in B_m} X_i\right).
\label{gencum}
\end{equation}
\end{defn}
In the literature, to simplify the notation,  the subscripts  in  ${\mathfrak I}$ are labeled by increasing integer numbers, that is ${\mathfrak I}=\{j,\ldots,l\}$ is replaced by $\{1,\ldots,n\}=[n].$ Thus $n$ is said the degree of the generalized cumulant while its order is the number $m$ of blocks.  If $m=1,$ \eqref{gencum} reduces to the joint moment
\begin{equation}
\mathfrak{K}({\mathbf 1}_n) = \mathfrak{K}^{[n]} = {\mathcal K} \left(X_1 \cdots X_n\right) = {\mathbb E}[X_1 \cdots X_n].
\label{jointM}
\end{equation}
If $m = n > 1,$ \eqref{gencum} reduces to the joint cumulant 
\begin{equation}
\mathfrak{K}({\mathbf 0}_n) = \mathfrak{K}^{B_1| \ldots| B_m} = {\mathcal K} \left( X_1, \ldots,  X_n\right).
\label{jointcum}
\end{equation}
The expression of generalized cumulants in terms of joint cumulants is \cite{mccullagh1988complementary}  
\begin{equation}
\mathfrak{K}(\pi) = \sum_{\tilde{\pi}: \pi \lor \tilde{\pi} = {\bm 1}_n} \prod_{B_i \in \tilde{\pi}} \kappa(B_i)
\label{gencumincumul}
\end{equation}
where the summation runs over all $\tilde{\pi}$ that are complementary set partitions of $\pi$ and $\kappa(B_i)$ denotes the joint cumulant of the r. v.'s corresponding to the indices in $B_i.$
\section{Methods to list complementary set partitions}
We begin this section with a short review of existing methods in the literature for  listing complementary set partitions of $\pi \in \Pi_n.$ In the second part, we propose a new method relied on two-blocks partitions which offers significant computational advantages (see Section \ref{Appl}). 
\vspace*{-0.3cm}
\subparagraph{Connected graphs.}
 Given a partition $\pi \in \Pi_n,$ suppose ${\mathcal G}(\pi)$ the graph whose vertices are labeled by $1,\ldots,n$ and edges connect vertices in the same block partition.  Thus ${\mathcal G}(\pi)$ is partitioned into cliques\footnote{A clique in an undirected graph is  a set of vertices in which every pair of distinct vertices is adjacent, forming a complete induced subgraph. A graph is complete if every vertex is directly connected to all other vertices by unique edges. A graph is connected if there exists a path, i.e. a sequence of edges, linking every pair of vertices.}, each one corresponding to a block of $\pi$. Among these graphs ${\mathcal G}({\mathbf 1}_n)$  is uniquely both complete and connected, consisting of a single clique. 
\begin{defn}\label{def1}
The sum of two graphs ${\mathcal G}(\pi)$ and ${\mathcal G}(\tilde{\pi})$ is the graph ${\mathcal G}(\pi) \oplus {\mathcal G}(\tilde{\pi})$ having the same $n$ vertices and edges obtained by the union of the edges of ${\mathcal G}(\pi)$ and those of ${\mathcal G}(\tilde{\pi}).$  
\end{defn}
\begin{thm}\label{thm31}  \cite{mccullagh1984tensor}
$\pi, \tilde{\pi} \in \Pi_n$ are complementary if and only if ${\mathcal G}(\pi) \oplus {\mathcal G}(\tilde{\pi})$ is connected.  
\end{thm}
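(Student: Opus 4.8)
The plan is to identify the connected components of the edge-union graph ${\mathcal G}(\pi) \oplus {\mathcal G}(\tilde{\pi})$ with the blocks of the join $\pi \lor \tilde{\pi}$ in $\Pi_n$. Once this identification is in hand the theorem is immediate: a graph on $n$ vertices is connected exactly when it has a single connected component, and for this particular graph that means $\pi \lor \tilde{\pi} = {\mathbf 1}_n$, i.e. $\pi$ and $\tilde{\pi}$ are complementary.

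First I would record the elementary observation that, for any $\sigma \in \Pi_n$, the connected components of ${\mathcal G}(\sigma)$ are precisely the blocks of $\sigma$: each block induces a clique, hence a connected subgraph, and there is no edge between vertices lying in distinct blocks. In particular every edge of ${\mathcal G}(\sigma)$ joins two vertices that belong to a common block of $\sigma$.

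Next, let $\rho \in \Pi_n$ be the partition whose blocks are the vertex sets of the connected components of ${\mathcal G}(\pi) \oplus {\mathcal G}(\tilde{\pi})$, and show $\rho = \pi \lor \tilde{\pi}$ by the usual two-sided argument. Since, by Definition \ref{def1}, every edge of ${\mathcal G}(\pi)$ and of ${\mathcal G}(\tilde{\pi})$ is an edge of the sum, each block of $\pi$ and each block of $\tilde{\pi}$ lies inside a single component of the sum; hence $\pi \leq \rho$ and $\tilde{\pi} \leq \rho$, so $\pi \lor \tilde{\pi} \leq \rho$. For the reverse inequality, let $\sigma$ be any common upper bound of $\pi$ and $\tilde{\pi}$ and suppose $i,j$ lie in the same component of the sum. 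Then there is a path $i = v_0, v_1, \ldots, v_k = j$ in ${\mathcal G}(\pi) \oplus {\mathcal G}(\tilde{\pi})$, and each edge $\{v_{t-1}, v_t\}$ comes from ${\mathcal G}(\pi)$ or from ${\mathcal G}(\tilde{\pi})$, so $v_{t-1}$ and $v_t$ share a block of $\pi$ or of $\tilde{\pi}$, and therefore, since $\sigma \geq \pi$ and $\sigma \geq \tilde{\pi}$, a block of $\sigma$. Transitivity of the relation ``lying in the same block of $\sigma$'' then forces $i$ and $j$ into one block of $\sigma$; thus $\rho \leq \sigma$, and taking $\sigma = \pi \lor \tilde{\pi}$ yields $\rho \leq \pi \lor \tilde{\pi}$. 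Hence $\rho = \pi \lor \tilde{\pi}$.

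Finally I would conclude: ${\mathcal G}(\pi) \oplus {\mathcal G}(\tilde{\pi})$ is connected iff $\rho$ has exactly one block iff $\rho = {\mathbf 1}_n$ iff $\pi \lor \tilde{\pi} = {\mathbf 1}_n$, which is the definition of $\pi$ and $\tilde{\pi}$ being complementary. The only genuine content lies in the middle step — recognizing the component partition of this edge-union of cliques as the lattice join — and the technical point to be careful about there is that reachability in the graph is exactly the transitive closure of the relation ``share a block of $\pi$ or of $\tilde{\pi}$'', which is why the path argument above, rather than a one-step block comparison, is needed.
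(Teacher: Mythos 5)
Your proof is correct. Note, however, that the paper does not prove this statement at all: Theorem \ref{thm31} is quoted from McCullagh's 1984 paper and used as a black box (the surrounding text only explains how to test connectivity via the Laplacian), so there is no in-paper argument to compare yours against. Your identification of the connected components of ${\mathcal G}(\pi) \oplus {\mathcal G}(\tilde{\pi})$ with the blocks of $\pi \lor \tilde{\pi}$ — proving $\pi \lor \tilde{\pi} \leq \rho$ from the inclusion of edges, and $\rho \leq \sigma$ for any common upper bound $\sigma$ via the path/transitivity argument — is the standard and complete proof of the cited fact, and you are right that the transitive-closure step is where the actual content lies.
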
 
 According to Theorem \ref{thm31}, a way to check if two partitions $\pi $ and $\tilde{\pi}$  are complementary is to verify that there exists a path in ${\mathcal G}(\pi) \oplus {\mathcal G}(\tilde{\pi})$ for each pair of vertices. To this aim, a classical strategy relies on working with its Laplacian matrix\footnote{The Laplacian matrix of ${\mathcal G}(\pi)$ is defined as $L_{i,j}[{\mathcal G}(\pi)] =  \deg(i)$ if $i = j,$ with $\deg(i)$ the number of edges terminating at the vertex $i,$ $L_{i,j}[{\mathcal G}(\pi)] =-1$  if $i \ne  j$ but $i$ and $j$ are connected by an edge, otherwise $0.$}.
If $L[{\mathcal G}(\pi) \oplus {\mathcal G}(\tilde{\pi})]$ has rank $n-1,$ then ${\mathcal G}(\pi) \oplus {\mathcal G}(\tilde{\pi})$ is connected as the number of connected components is equal to $1$ \cite{chung1997spectral}. In summary, 
to find all complementary set partitions of $\pi$,  the algorithm checks each $\tilde{\pi} \in \Pi_n$ by computing the rank of $L[{\mathcal G}(\pi) \oplus {\mathcal G}(\tilde{\pi})].$  In Table 1, this algorithm is listed as {\tt csp\_LA}. For completeness, we also employed the {\tt IsConnected} function from the {\tt Hypergraphs Maple 2024} package to verify graph connectivity. In Table 1,  this algorithm is listed as {\tt csp\_GR}. 
\subparagraph{Stafford's algorithm.}
The following example shows  the technique proposed by Stafford \cite{stafford1994automating} for obtaining complementary set partitions.  In Table 1, this algorithm is listed as {\tt cps\_JS}. 
\begin{example} \label{exampleStafford}
To express ${\rm cov}(X_1, X_2 X_3)$ in terms of joint cumulants, first set
$Y_1 = X_1$ and $Y_2 = X_2 X_3.$ Then 
from ${\rm cov}(Y_1, Y_2) = {\mathbb E}[Y_1, Y_2] - {\mathbb E}[Y_1]  {\mathbb E}[ Y_2]$ recover
${\rm cov}(X_1, X_2 X_3) = {\mathbb E}[X_1 X_2 X_3] - {\mathbb E}[X_1]{\mathbb E}[X_2 X_3].$ Now express the joint moments of $X_i$'s in terms of joint cumulants, that is 
${\mathbb E}[X_1]  = \kappa_{1}, {\mathbb E}[X_2 X_3]  =  \kappa_{2} \kappa_{3}  + \kappa_{2 \,3}$ and
 ${\mathbb E}[X_1 X_2 X_3] =  \kappa_{1} \kappa_{2} \kappa_{3} + \kappa_{1} \kappa_{2\,3} + \kappa_{2} \kappa_{1\,3}  + \kappa_{3} \kappa_{1\,2}  + \kappa_{1\,2\,3}.$ Plug these expressions in $ {\mathbb E}[X_1 X_2 X_3] - {\mathbb E}[X_1]{\mathbb E}[X_2 X_3]$ to get ${\rm cov}(X_1, X_2 X_3) = \kappa_{123} + \kappa_{2} \kappa_{1\,3} + \kappa_{3} \kappa_{1 \, 2}.$
Thus the complementary set partitions of $1|23$ are $123, 13|2, 12|3.$
\end{example}
\subsection{The new two-blocks partition method} \label{newal}
Suppose $\pi = B_1| \ldots| B_m \in \Pi_{n,m}$ with $m \geq 2.$ The steps of the new proposed method (see Table 1, algorithm {\tt csp}) to list the complementary set partitions of $\pi$ can be summarized as follows:
\begin{description}
\item[{\it a)}]  let  $\{C_1, C_2\}$ be a partition in two blocks of $[m]$ and set  
\begin{equation}
A_1 = \cup_{i \in C_1} B_i \qquad \hbox{and} \quad A_2 = \cup_{i \in C_2} B_i;
\label{(7)}
\end{equation}
\item[{\it b)}] for all $\tilde{\pi}_1 \in \Pi_{A_1}$ and $\tilde{\pi}_2 \in \Pi_{A_2}$
consider the partition $\tilde{\pi} = \tilde{\pi}_1 \cup \tilde{\pi}_2 \in \Pi_n;$  
\item[{\it c)}] repeat steps {\it a)} and {\it b)} for each partition in two blocks of $[m]$ and denote with ${\mathcal T}^{(n)}_{\pi,m}$ the set of all partitions $\tilde{\pi}$ constructed as above; 
\item[{\it d)}]  the set ${\Pi}_n -  {\mathcal T}^{(n)}_{\pi,m},$ is of  all complementary set partitions of $\pi.$   
\end{description}

\begin{example}
Consider $\pi=B_1|B_2|B_3 \in \Pi_{5,3}$ with $B_1=\{1\}, B_2 = \{2,3\}, B_3 = \{4,5\}.$ The partitions $C_1|C_2$ in two blocks of the indexes $\{1,2,3\}$ are $1|23, 3|12$ and  $12|3.$ Set $A_1  = B_1 \cup B_2 = \{1,2,3\}$ and $A_2=B_3=\{4,5\}.$ The partitions  
of $A_1$ are $ 123, 1|23, 13|2, 12|3, 1|2|3,$ those of $A_2$ are $45, 4|5.$ Thus the partitions $\tilde{\pi}$ in step {\it b)} are $123|45, 123|4|5,$  $1|23|45, 1|23 |4|5, 13|2|45, 13|2|4|5, 
12|3|45, 12|3|4|5,  1|2|3|45, 1|2|3|4|5.$ The above are partitions of $\Pi_5$ not complementary to  $1|23|45$ as their least upper bound with $\pi$ is $\pi$ itself or $A_1| A_2 = 123|45.$  Repeating the same arguments for $A_1  = B_1, A_2=B_2 \cup B_3$ and $A_1  = B_1 \cup B_3, A_2=B_2,$   all set partitions of $\Pi_5$ not complementary to $1|23|45$ are retrieved. 
\end{example}
The following theorem proves that the steps {\it a)}-{\it c)} of the procedure outlined above indeed generate all set partitions that are not complementary to $\pi \in \Pi_{n,m}.$  
\begin{thm} \label{thm3.2}
If $\pi=B_1| \ldots|B_m \in \Pi_{n,m},$ with $m \geq 2,$ then the set of all partitions not complementary to $\pi$ is
\begin{equation}
{\mathcal T}^{(n)}_{\pi,m} =\bigg\{\tilde{\pi}=\tilde{\pi}_1 \cup
\tilde{\pi}_2 \in \Pi_n| \tilde{\pi}_1 \in \Pi_{A_1}, \tilde{\pi}_2 \in \Pi_{A_2}\,  \hbox{\rm with $A_1, A_2$ in \eqref{(7)} at varying}   \, \, C_1|C_2 \in \Pi_{m,2} \bigg\}.
\label{fund3}
\end{equation}
\end{thm}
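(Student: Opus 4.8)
The plan is to prove the two set inclusions separately, after recasting complementarity in the language of the refinement order on $\Pi_n$. The pivotal observation is that $\tilde{\pi}$ fails to be complementary to $\pi$ if and only if $\pi$ and $\tilde{\pi}$ admit a common two-block coarsening, i.e.\ there is a partition $A_1|A_2 \in \Pi_{n,2}$ with $\pi \le A_1|A_2$ and $\tilde{\pi} \le A_1|A_2$. One implication is immediate: if such $A_1|A_2$ exists, then $\pi \lor \tilde{\pi} \le A_1|A_2 < {\mathbf 1}_n$, so $\pi \lor \tilde{\pi} \ne {\mathbf 1}_n$. For the converse, set $\sigma := \pi \lor \tilde{\pi}$ and suppose $\sigma \ne {\mathbf 1}_n$; then $\sigma$ has at least two blocks, so choosing any block $D$ of $\sigma$ and putting $A_1 = D$, $A_2 = [n] \setminus D$ yields a genuine two-block partition with $\sigma \le A_1|A_2$, whence $\pi \le A_1|A_2$ and $\tilde{\pi} \le A_1|A_2$ because $\pi, \tilde{\pi} \le \sigma$ by definition of the least upper bound.

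Next I would translate each of the two refinement conditions into the combinatorial data appearing in \eqref{fund3}. Since $B_1, \ldots, B_m$ partition $[n]$, the relation $\pi \le A_1|A_2$ holds exactly when every $B_i$ is contained in $A_1$ or in $A_2$; setting $C_1 = \{i : B_i \subseteq A_1\}$ and $C_2 = \{i : B_i \subseteq A_2\}$ then gives $C_1|C_2 \in \Pi_{m,2}$ — both parts are nonempty because $A_1, A_2$ are nonempty and the $B_i$ cover $[n]$ — and recovers $A_1, A_2$ precisely as in \eqref{(7)}. Symmetrically, $\tilde{\pi} \le A_1|A_2$ holds exactly when each block of $\tilde{\pi}$ lies in $A_1$ or in $A_2$, which is the same as a decomposition $\tilde{\pi} = \tilde{\pi}_1 \cup \tilde{\pi}_2$ with $\tilde{\pi}_1 \in \Pi_{A_1}$ and $\tilde{\pi}_2 \in \Pi_{A_2}$ (each $\tilde{\pi}_j$ is nonempty and covers $A_j$, since every element of $A_j$ belongs to a block of $\tilde{\pi}$ that is then forced to lie inside $A_j$); conversely, every $\tilde{\pi}_1 \cup \tilde{\pi}_2$ of this shape refines $A_1|A_2$.

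Combining the two translations closes the argument. If $\tilde{\pi} \in {\mathcal T}^{(n)}_{\pi,m}$, pick the defining $C_1|C_2$ and the corresponding $A_1, A_2$; then $A_1|A_2$ is a two-block partition of $[n]$ (the $B_i$ partition $[n]$ and $C_1|C_2$ partitions $[m]$), $\pi \le A_1|A_2$, and $\tilde{\pi} = \tilde{\pi}_1 \cup \tilde{\pi}_2 \le A_1|A_2$, so by the first paragraph $\tilde{\pi}$ is not complementary to $\pi$. Conversely, if $\tilde{\pi}$ is not complementary to $\pi$, the first paragraph supplies a two-block partition $A_1|A_2$ coarser than both $\pi$ and $\tilde{\pi}$, and the second paragraph produces the corresponding $C_1|C_2 \in \Pi_{m,2}$ together with the decomposition $\tilde{\pi} = \tilde{\pi}_1 \cup \tilde{\pi}_2$, which exhibits $\tilde{\pi} \in {\mathcal T}^{(n)}_{\pi,m}$. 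Hence ${\mathcal T}^{(n)}_{\pi,m}$ is exactly the set of partitions not complementary to $\pi$.

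I do not expect a deep obstacle; the argument is essentially bookkeeping with the refinement order. The one step that needs genuine care is the passage from a two-block partition of $[n]$ back to a two-block partition of $[m]$: starting from $\sigma = \pi \lor \tilde{\pi} \ne {\mathbf 1}_n$, one must verify that the induced set $C_1 = \{i : B_i \subseteq A_1\}$ is a proper nonempty subset of $[m]$, which uses $\pi \le \sigma \le A_1|A_2$ together with $A_1, A_2 \ne \emptyset$ (so that no $B_i$ can lie in both $A_1$ and $A_2$, and at least one lies in each). As a sanity check, when $m = 1$ both sides of \eqref{fund3} are empty ($\Pi_{m,2} = \emptyset$, and ${\mathbf 1}_n$ has no proper coarsening), so the hypothesis $m \ge 2$ only serves to make the statement non-vacuous.
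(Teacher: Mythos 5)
Your proof is correct and follows essentially the same route as the paper: both directions rest on the equivalence between non-complementarity and the existence of a common two-block coarsening $A_1|A_2$, with the converse obtained by coarsening $\pi \lor \tilde{\pi} \ne {\mathbf 1}_n$ down to two blocks. Your write-up is in fact more explicit than the paper's about recovering $C_1|C_2 \in \Pi_{m,2}$ from $A_1|A_2$ and about the decomposition $\tilde{\pi} = \tilde{\pi}_1 \cup \tilde{\pi}_2$, steps the paper leaves implicit.
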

\begin{proof}
We prove that ${\mathcal T}^{(n)}_{\pi,m}$ contains exactly those partitions not complementary to $\pi.$ Indeed, if $\tilde{\pi} \in {\mathcal T}^{(n)}_{\pi,m}$ then $\pi \lor \tilde{\pi} \leq A_1|A_2$ since $\pi \leq A_1| A_2$ and $\tilde{\pi} \leq A_1|A_2,$ as from \eqref{(7)} $\tilde{\pi}_1 \in \Pi_{A_1}, \tilde{\pi}_2 \in \Pi_{A_2}$ respectively. Vice-versa, suppose $\tilde{\pi}$ not complementary to 
$\pi.$ Then there exists $\pi^{\star} \in \Pi_n$ such that $\pi \lor \tilde{\pi} = \pi^{\star} \ne  {\bm 1}_n.$ If $\pi^{\star} \in \Pi_{n,m}$ with $m \geq 2,$ as
$\pi \leq \pi^{\star},$ there exists $A_1|A_2 \in \Pi_{n,2}$ such that  $\pi^{\star} \leq A_1|A_2$ since it's sufficient to join two or more block of $\pi$ until the structure of two blocks is  reached. 
\end{proof}
\begin{remark} \label{rem3.1}
Note that $|\Pi_n| = B_n$ the $n$-th Bell number. Therefore the number of pairs $(\tilde{\pi}_1, \tilde{\pi}_2)$ in \eqref{fund3} is $B_{|A_1|} \times B_{|A_2|}$  with
$|A_j| = \sum_{i \in C_j} |B_i|, j=1,2.$ Among these pairs, there are some that provide the same partition $\tilde{\pi}.$ Therefore, the number of not complementary set partitions of $\pi$ can be obtained using the inclusion-exclusion principle. Indeed set $l=|\Pi_{m,2}|=2^{(m-1)}-1$ and denote with
\begin{equation}
{\mathcal T}_i =\bigg\{\tilde{\pi}=\tilde{\pi}_1 \cup
\tilde{\pi}_2 \in \Pi_n| \tilde{\pi}_1 \in \Pi_{A_1}, \tilde{\pi}_2 \in \Pi_{A_2}\,  \hbox{\rm with $A_1, A_2$ in \eqref{(7)} for a fixed}   \, \, C_1|C_2 \in \Pi_{m,2} \bigg\}
\label{fund3bis}
\end{equation}
for $i=1,\ldots,l.$ Thus we have
\begin{equation}
|{\mathcal T}^{(n)}_{\pi,m}| = | \cup_{i=1}^l {\mathcal T}_i| = \sum_{k=1}^l 
(-1)^{k+1}  \left( \sum_{1 \leq i_1 \leq \cdots \leq i_k \leq l} |{\mathcal T}_{i_1} \cap \ldots \cap {\mathcal T}_{i_k}| \right).
\label{fund3ter}
\end{equation}
\end{remark}
\section{Generalized multivariate cumulants}
This section introduces an extension of generalized cumulants through the use of multiset subdivisions of the subscripts of $X_i$'s. Roughly speaking a multiset is a set in which repetitions are allowed. In particular, a multiset $M$ is a pair $(\bar{M}, f)$ where $f: \bar{M} \mapsto {\mathbb N}$ and $\bar{M}$ is a set called the support of $M$  \cite{di2008unifying}. For every $j \in \bar{M}, f(j)$ is said the multiplicity of $j$ and  $|M| = \sum_{j \in \bar{M}} f(j)$ is the lenght of $M.$ A multivariate cumulant can be recovered from a joint cumulant using a  multiset of subscripts as follows
\begin{equation}
\kappa_{{\bm i}}({\bm X}) = \kappa_{i_1,\ldots,i_n}(X_1, \ldots,X_n) = {\mathcal K}(\underbrace{X_1, \ldots, X_1}_{i_1}, \ldots, \underbrace{X_n, \ldots, X_n}_{i_n}) 
\label{multisetcum10}
\end{equation} 
where ${\mathcal K}$ denotes the joint cumulant  and the order of the repeated r.v.'s does not matter. The \lq\lq bag\rq\rq $M$ of the subscripts in \eqref{multisetcum10} is an example of multiset with  support $\bar{M}=[n]$
and multiplicities $i_1, \ldots, i_n.$ Unless stated otherwise, we will write $M = \{1^{(i_1)}, ,\ldots,n^{(i_n)} \}$ in the following sections.  When one or more integers $i_1, \ldots, i_n$ equals  $0$, the corresponding r.v.'s are omitted from the bag.   For example $\kappa_{1,0,2}(X_1, X_2, X_3)= {\mathcal K}(X_1, X_3, X_3).$ 

Just as a set can be partitioned into disjointed subsets, similarly a multiset can be subdivided into disjointed submultisets. A multiset $M_i = (\bar{M}_i,f_i)$ is a submultiset of $M= (\bar{M},f)$ if  $\bar{M}_i \subseteq \bar{M}$ and $f_i(j) \leq f(j)$ for all $j \in \bar{M}.$  A subdivision\footnote{Formally, a subdivision of $M= (\bar{M},f)$ is a multiset of $m \leq |M|$ non-empty submultisets $M_i = (\bar{M}_i,f_i)$ of $M$ such that $\cup_{i=1}^m \bar{M}_i = \bar{M}$ and $\sum_{i=1}^m f_i(j)=f(j)$ for all $j \in \bar{M}.$  } of a multiset $M$ is a multiset of submultisets of $M$, such that their disjoint union returns $M.$ Thus the following extends Definition \ref{defgencum1}.
\begin{defn} \label{defgencum0}
Let $M$ be a multiset of $|M|$ subscripts chosen among the ones of 
$X_i$'s and $S$ a subdivision in submultisets $M_i=(\bar{M}_i, f_i), i=1, \ldots,m \leq |M|.$ A generalized multivariate cumulant is the joint cumulant  
\begin{equation}
\mathfrak{K}(S) = \mathfrak{K}^{M_1|\ldots|M_m} = {\mathcal K} \bigg( \prod_{i \in \bar{M}_1} X_i^{f_1(i)}, \ldots, \prod_{i \in \bar{M}_m} X_i^{f_m(i)}\bigg).
\label{gencum3}
\end{equation}
\end{defn}
To simplify the notation, the multiset $M$ of subscripts is denoted  by $\{1^{(i_1)}, \ldots,n^{(i_n)}\}$ where each $j \in [n]$ corresponds to  $i_j$ repeated subscripts of 
$X_i$'s.  If $M = [n]$ with no repeating integers, then $S$ returns a set partition $\pi$ and \eqref{gencum3} reduces to \eqref{gencum}. If $m=1$ then $M_1 = M$ and  $\mathfrak{K}^{M}={\mathbb E}[X_1^{i_1}\cdots X_n^{i_n}]$ a multivariate moment. If $m=|M|$ then \eqref{gencum3} returns the multivariate cumulant \eqref{multisetcum10}.  

To handle multivariate cumulants, multi-index partitions can be used instead of multiset subdivisions.  Multi-index partitions play a key role in the multivariate Faà di Bruno formula and are better suited for non-symbolic computational environments \cite{di2022kstatistics}. Recall that a composition of a multi-index\footnote{The multi-indexes  $ (0,0,\ldots,0),(1,1,\ldots,1) \in {\mathbb N}_0^n$ will be denoted by $\vec{{\bm 0}}_n$ and  $\vec{{\bm 1}}_n$ respectively.} ${\bm i}=(i_1,  \ldots,i_n) \in {\mathbb N}_0^n, $  in $m$ multi-indexes is a matrix $\Lambda = ({\bm \lambda}_1, \ldots, {\bm \lambda}_m)$ such that ${\bm \lambda}_1 + \cdots + {\bm \lambda}_m = {\bm i},$ see \cite{di2011new}. The length of ${\Lambda}$ is the number of its columns, denoted by $l({\Lambda}).$  A multi-index partition is a composition whose columns are in lexicografic order. In the following, we fix a reverse lexicografic order\footnote{As example $(a_1, b_1) > (a_2, b_2)$ if $a_1 > a_2$ or $a_1 = a_2$ and $b_1 > b_2.$}. Usually multi-index partitions are denoted by  $\Lambda = (\boldsymbol{\lambda}_1^{r_1}, \boldsymbol{\lambda}_2^{r_2}, \ldots) \vdash {\bm i}$ with  $r_1 \geq 1$ columns equal to $\boldsymbol{\lambda}_1 > \boldsymbol{\lambda}_2, r_2 \geq 1$ columns  equal to $\boldsymbol{\lambda}_2 > \boldsymbol{\lambda}_3$ and so on. In such a case $l(\Lambda) = r_1  + r_2 + \cdots.$ 
\begin{remark} \label{faa} If ${\bm X}=(X_1,  \ldots, X_n)$ is a random vector, then $\mu_{\bm i}({\bm X})=\mu_{i_1 \ldots i_n}({\bm X})={\mathbb E}[{\bm X}^{\bm i}] ={\mathbb E}[ X_1^{i_1} \cdots X_n^{i_n}] $ denotes its  multivariate moment of order ${\bm i}$ while its multivariate cumulants  $\kappa_{\bm i}({\bm X})=\kappa_{{i_1 \ldots i_n}}({\bm X})$ are defined by $\kappa_{\bm 0} = 0$  and the identity $
\sum_{|{\bm i}| \geq 1} \kappa_{\bm i}({\bm X}) \frac{{\bm z}^{\bm i}}{{\bm i}!}
= \ln \left( 1 + \sum_{|{\bm i}| \geq 1} {\mathbb E}[{\bm X}^{\bm i}] \frac{{\bm z}^{\bm i}}{{\bm i}!}\right)$
where ${\bm z}^{\bm i} = z_1^{i_1} \cdots z_n^{i_n},  {\bm i}! = i_1! \cdots i_n!, |{\bm i}| = i_1 + \cdots + i_n.$  Formulae giving multivariate moments in terms of multivariate cumulants (and viceversa) using multi-index partitions are \cite{di2022kstatistics} 
\begin{equation}
\mu_{\bm i}({\bm X}) = \sum_{\Lambda \vdash {\bm i}} d_ {\Lambda} \kappa_{\bm X}(\Lambda) \quad \hbox{\rm and} \quad \kappa_{\bm i}({\bm X}) = \sum_{\Lambda \vdash {\bm i}} (-1)^{l(\Lambda) - 1} (l(\Lambda)-1)! d_ {\Lambda}  \mu_{\bm X}(\Lambda)
\label{cummom}
\end{equation}
where the sum is over all multi-index partitions $\Lambda= (\boldsymbol{\lambda}_1^{r_1}, \boldsymbol{\lambda}_2^{r_2}, \ldots) \vdash {\bm i}$ of length $l(\Lambda)$ and 
 \begin{equation}
d_{\Lambda}  := {\bm i}! \prod_i \frac{1}{({\bm \lambda}_i!)^{r_i}  r_i! }, \quad \mu_{\bm X}(\Lambda) := \prod_i  [\mu_{\boldsymbol{\lambda}_i}({\bm X})]^{r_i}, 
\quad \kappa_{\bm X}(\Lambda) := \prod_i  [\kappa_{\boldsymbol{\lambda}_i}({\bm X})]^{r_i}.
\label{multfunct}
\end{equation}
If $\Lambda  \vdash \vec{{\bm 1}}_n$ then $d_{\Lambda}=1$ and \eqref{cummom} gives
\begin{equation}
\mu_{\scriptscriptstyle{11\ldots1}}({\bm X}) = \sum_{\Lambda \vdash \vec{{\bm 1}}_n} 
\kappa_{\bm X}(\Lambda) \quad \hbox{\rm and} \quad  \kappa_{\scriptscriptstyle{11\ldots1}}({\bm X}) = \sum_{\Lambda \vdash \vec{{\bm 1}}_n} (-1)^{l(\Lambda) - 1} (l(\Lambda)-1)! \mu_{\bm X}(\Lambda)
\label{cummom1}
\end{equation}
with  $\mu_{\scriptscriptstyle{11\ldots1}}({\bm X})$ and $\kappa_{\scriptscriptstyle{11\ldots1}}({\bm X})$  joint moment and joint cumulant  respectively.
\end{remark}
Subdivisions of $M = \{1^{(i_1)}, ,\ldots,n^{(i_n)} \}$ are in one-to-one correspondence with multi-index partitions  of ${\bm i}=(i_1, \ldots, i_n)$ \cite{di2022kstatistics}.
\begin{example} \label{6.5}
Consider the multiset $M=\{1^{(1)},2^{(2)},3^{(2)} \}.$
Examples of subdivisions are $S_1=1|23|23$ and  $S_2=123|23$ corresponding to the multi-index partitions $\Lambda_1=({\bm \lambda}_1, {\bm \lambda}_2^2), \Lambda_2=({\bm \lambda}_3,{\bm \lambda}_4) \vdash (1,2,2)^{\intercal}$ respectively, with
${\bm \lambda}_1 = (1, 0, 0)^{\!\intercal}, {\bm \lambda}_2 = (0, 1, 1)^{\!\intercal}, {\bm \lambda}_3 = (1, 1, 1)^{\!\intercal}, {\bm \lambda}_4 =  (0, 1, 1)^{\!\intercal}.$
\end{example}
With this notation, the definition of generalized multivariate cumulants can be re-expressed in terms of multi-index partitions rather than multiset subdivisions.
\begin{defn} \label{defgencum}
The generalized multivariate cumulant of degree $n$ and order $l(\Lambda) = r_1 + \cdots + r_m$ corresponding to the multi-index partition  $\Lambda =({\bm \lambda}_1^{r_1}, \ldots, {\bm \lambda}_m^{r_m}) \vdash {\bm i} \in {\mathbb N}^n$  is the joint cumulant
\begin{equation}
{\mathfrak K}_{\Lambda}({\bm X}) ={\mathfrak K}^{r_1| \ldots| r_m}_{{\bm \lambda}^{\!\intercal}_1; \ldots; {\bm \lambda}^{\!\intercal}_m}({\bm X}) = {\mathcal K} \big( \underbrace{ {\bm X}^{{\bm \lambda}_1}, \ldots,{\bm X}^{{\bm \lambda}_1}}_{r_1}, \ldots, \underbrace{{\bm X}^{{\bm \lambda}_m}, \ldots, {\bm X}^{{\bm \lambda}_m}}_{r_m} \big).
\label{gencummultp}
\end{equation}
\end{defn}
\noindent 
If $m=1$ and $r_1=1,$ then $\Lambda = {\bm i}$ and \eqref{gencummultp} returns the ${\bm i}$-th multivariate moment, that is
${\mathfrak K}_{{\bm i}} ({\bm X})= {\mathcal K}({\bm X}^{{\bm i}}) = {\mathbb E}[X_1^{i_1} \cdots X_n^{i_n}].$
If $m=n > 1$ and $({\bm \lambda}_1, \ldots, {\bm \lambda}_n)=({\bm e}_1, \ldots, {\bm e}_n),$ is the standard basis  of
${\mathbb R}^n,$ then ${\bm X}^{{\bm e}_j}=X_j$ for $j=1, \ldots,n $ and  \eqref{gencummultp} returns the ${\bm i}$-th multivariate cumulant \eqref{multisetcum10}.
\begin{example}
If ${\bm X}=(X_1,X_2,X_3)$ then ${\mathfrak K}^{1}_{122}({\bm X})={\mathbb E}[X_1 X_2^2 X_3^2],$  and ${\mathfrak K}^{1| 2| 2}_{100; 010; 001} ({\bm X})= \kappa_{122}({\bm X})$ the multivariate cumulant of order $(1,2,2).$
\end{example}
The case $r_1=\ldots=r_m=1$ and ${\bm i}=\vec{{\bm 1}}_n$ is the one of special interest for our purposes. Indeed a multi-index partition $\Lambda \vdash \vec{{\bm 1}}_n$ with no equal columns corresponds to a partition $\pi \in \Pi_n,$  as the non-zero entries in each ${\bm \lambda}_j$ identify the row indexes belonging to the $j$-th block $B_j$ of $\pi.$ In such a case \eqref{gencummultp} reduces to $
 {\mathcal K} \left({\bm X}^{{\bm \lambda}_1}, \ldots,{\bm X}^{{\bm \lambda}_m}  \right) =  \mathfrak{K}(\pi),$ the generalized cumulant  \eqref{gencum}.

In order to provide a formula for expressing generalized multivariate cumulants in terms of multivariate cumulants, we need to represent complementary set partitions using multi-index partitions. This is the focus of the next subsection.

\subsection{Complementary $\vec{{\bm 1}}_n$-partitions}
Complementary set partitions can be characterized using partitions of 
$\vec{{\bm 1}}_n.$  To this aim we fix a different canonical representation ({\it cr2}) of partitions:  elements within blocks are ordered increasingly, and the blocks are arranged in lexicographic order. 
\begin{defn} \label{3.1}
The $\vec{{\bm 1}}_n$-partition corresponding to $\pi =B_1 |\ldots |B_m \in \Pi_{n,m}$ is the multi-index partition $\Lambda_{\pi} = ({\bm \lambda}_1, \ldots, {\bm \lambda}_m) \vdash \vec{{\bm 1}}_n$ such that its $(t,j)$-th element $({\bm \lambda}_j)_t = 1$ if $t \in B_j,$ otherwise $0.$
\end{defn}
The number $n$ of rows and the number $m=|\pi|$ of columns of $\Lambda$ are said the order and the size of the $\vec{{\bm 1}}_n$-partition respectively. We use the notation $\Lambda_{\pi} = {\bm \lambda}_1^{\intercal}|  \ldots| {\bm \lambda}_m^{\intercal}$ when it's needed.

Let $V_{\pi} = {\rm span} ({\bm \lambda}_1, {\bm \lambda}_2, \ldots)$ denote the subspace spanned by  the columns of ${\Lambda}_{\pi}$ with ${\rm dim}(V_{\pi}) = l(\Lambda_{\pi}).$ We say that $V_{\pi}$ is the column span of $\Lambda_{\pi}.$  
In particular $V_{{\mathbf 1}_n}$ denotes the subspace spanned by $\vec{{\bm 1}}_n.$
If $\Lambda_{\pi} =  \Lambda_{\tilde{\pi}}$ or every column of $\Lambda_{\tilde{\pi}}$ is a non-zero linear combination of columns of $\Lambda_{\pi}$ with coefficients in $\{0,1\}$ then $V_{\tilde{\pi}}={\rm span} (\tilde{{\bm \lambda}}_1, \tilde{{\bm \lambda}}_2, \ldots)\subseteq  {\rm span} ({\bm \lambda}_1, {\bm \lambda}_2, \ldots)=V_{\pi}.$  If we consider the set inclusion $\subseteq$ as partial order, then the greatest lower bound of two  column spans of  different $\vec{{\bm 1}}_n$-partitions is given by their intersection. 
\begin{example} \label{3.1}
We have  $\Lambda_{{\bm 1}_n} =\vec{{\bm 1}}_n^{\,\intercal}$ and  $\Lambda_{\vec{{\bm 0}}_n} = I_n,$ where  $I_n$ denotes the identity matrix. The following are $\vec{{\bm 1}}_4$-partitions:
$${\Lambda}_{1|234} = 1000|0111, \,  {\Lambda}_{123|4}=1110|0001,
\, {\Lambda}_{12|34}=1100|0011, \,  \Lambda_{12|3|4}=1100|0010|0001.$$ 
Notice that $V_{12|34} \subseteq V_{12|3|4}$ so that $V_{12|34} \cap V_{12|3|4} = V_{12|34}.$  Instead $V_{123|4} \cap V_{12|34} = V_{{\mathbf 1}_4},$  since no column of ${\Lambda}_{123|4}$ is a sum of columns from ${\Lambda}_{12|34}$ and viceversa. 
\end{example}
\begin{defn}
Two $\vec{{\bm 1}}_n$-partitions  $\Lambda$ and $\tilde{\Lambda}$ are said to be complementary if $V \cap \tilde{V} = V_{{\mathbf 1}_n},$ with  $V,\tilde{V}$  column spans of $\Lambda, \tilde{\Lambda}$ respectively.
\end{defn}

\begin{thm} \label{mainth}
$\pi \lor \tilde{\pi} = \pi^{\star}$  if and only if  $V_{\pi} \cap V_{\tilde{\pi}} = V_{\pi^{\star}}.$  
\end{thm}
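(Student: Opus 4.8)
The plan is to establish the equivalence $\pi \lor \tilde\pi = \pi^\star \iff V_\pi \cap V_{\tilde\pi} = V_{\pi^\star}$ by translating the lattice-theoretic statement about set partitions into a linear-algebraic statement about the associated $\{0,1\}$-column spans, using the fact that the map $\pi \mapsto V_\pi$ is an order-preserving map from the refinement lattice $\Pi_n$ into the lattice of subspaces of $\mathbb{R}^n$ ordered by inclusion. Concretely, I would first record the key structural fact about $V_\pi$: since the columns ${\bm \lambda}_1, \ldots, {\bm \lambda}_m$ of $\Lambda_\pi$ are the indicator vectors of the disjoint blocks $B_1, \ldots, B_m$ of $\pi$, they are linearly independent and $V_\pi = \mathrm{span}({\bm \lambda}_1, \ldots, {\bm \lambda}_m)$ consists exactly of those vectors in $\mathbb{R}^n$ that are constant on each block of $\pi$. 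Equivalently, $v \in V_\pi$ if and only if $v_s = v_t$ whenever $s, t$ lie in a common block of $\pi$. This characterization is the workhorse for everything that follows, and it immediately gives the order-preserving property already noted in the excerpt: $\pi \leq \tilde\pi$ (i.e. $\pi$ refines $\tilde\pi$) implies $V_\pi \supseteq V_{\tilde\pi}$, because being constant on the coarser blocks of $\tilde\pi$ is a stronger condition than being constant on the finer blocks of $\pi$.

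Next I would prove the sharper claim that $\pi \mapsto V_\pi$ reflects the order as well, i.e. $V_\pi \supseteq V_{\tilde\pi} \iff \pi \leq \tilde\pi$, and indeed that it is an anti-isomorphism onto its image that carries joins to intersections: $V_{\pi \lor \tilde\pi} = V_\pi \cap V_{\tilde\pi}$. For the reflection of order, suppose $V_{\tilde\pi} \subseteq V_\pi$; take any block $\tilde B$ of $\tilde\pi$ and its indicator vector, which lies in $V_\pi$, hence is constant on blocks of $\pi$; since this indicator takes only the values $0$ and $1$ and equals $1$ exactly on $\tilde B$, every block of $\pi$ is contained either in $\tilde B$ or in its complement, so $\tilde B$ is a union of blocks of $\pi$, giving $\pi \leq \tilde\pi$. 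For the join statement, observe $V_\pi \cap V_{\tilde\pi}$ is precisely the set of vectors constant on blocks of $\pi$ \emph{and} constant on blocks of $\tilde\pi$; such a vector is constant on every block of the common refinement's dual, namely on every connected component of the graph whose edges link indices sharing a block in $\pi$ or in $\tilde\pi$ — and those components are exactly the blocks of $\pi \lor \tilde\pi$. Hence $V_\pi \cap V_{\tilde\pi}$ equals the space of vectors constant on blocks of $\pi \lor \tilde\pi$, which is $V_{\pi \lor \tilde\pi}$. Combining the order-reflection with this equality, the map $\pi \mapsto V_\pi$ is injective, so $V_\pi \cap V_{\tilde\pi} = V_{\pi^\star}$ forces $\pi \lor \tilde\pi = \pi^\star$ and conversely, which is the desired biconditional.

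The main obstacle is the join identity $V_\pi \cap V_{\tilde\pi} = V_{\pi \lor \tilde\pi}$, and more precisely the $\subseteq$ inclusion: a priori $V_\pi \cap V_{\tilde\pi}$ could contain vectors constant on $\pi$-blocks and $\tilde\pi$-blocks yet not obviously expressible as a $\{0,1\}$-linear combination of the columns of $\Lambda_{\pi \lor \tilde\pi}$. The resolution is the connectivity argument sketched above: one shows that if $s$ and $t$ lie in the same block of $\pi \lor \tilde\pi$ then there is a chain $s = t_0, t_1, \ldots, t_k = t$ in which consecutive indices share a block of $\pi$ or of $\tilde\pi$, and any $v$ in the intersection satisfies $v_{t_0} = v_{t_1} = \cdots = v_{t_k}$, so $v$ is constant on the blocks of $\pi \lor \tilde\pi$ and thus lies in $V_{\pi \lor \tilde\pi}$; the reverse inclusion $V_{\pi\lor\tilde\pi}\subseteq V_\pi\cap V_{\tilde\pi}$ is the easy order-preserving direction. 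This is essentially the same combinatorics underlying Theorem~\ref{thm31} on connected graphs, so an alternative, shorter route is to cite Theorem~\ref{thm31}: specializing to $\pi^\star = {\bm 1}_n$ recovers the complementarity criterion, and the general statement follows by applying that criterion inside each block of $\pi^\star$ after checking $\pi \leq \pi^\star$ and $\tilde\pi \leq \pi^\star$ both hold (equivalently $\pi^\star \geq \pi \lor \tilde\pi$) — but I would prefer the self-contained linear-algebra proof since it is short and makes the dictionary between the two canonical representations transparent. A minor point to handle carefully is dimension bookkeeping: $\dim V_\pi = l(\Lambda_\pi) = |\pi|$ because the block indicators are independent, which gives a clean sanity check that $\dim(V_\pi \cap V_{\tilde\pi}) = |\pi \lor \tilde\pi|$ and confirms no spurious vectors sneak into the intersection.
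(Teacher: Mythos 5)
Your proof is correct, and it in fact supplies more than the paper's own argument does. The paper's proof establishes only the order anti-embedding ($\pi \leq \tilde{\pi}$ if and only if $V_{\tilde{\pi}} \subseteq V_{\pi}$, argued via columns of $\Lambda_{\tilde{\pi}}$ being $\{0,1\}$-combinations of columns of $\Lambda_{\pi}$) and then concludes in a single sentence that the least upper bound $\pi \lor \tilde{\pi}$ corresponds to the greatest lower bound $V_{\pi} \cap V_{\tilde{\pi}}$. That last step tacitly identifies the greatest lower bound taken \emph{inside the image} of the map $\pi \mapsto V_{\pi}$ with the plain subspace intersection, i.e.\ it assumes that $V_{\pi} \cap V_{\tilde{\pi}}$ is itself of the form $V_{\pi^{\star}}$ with $\pi^{\star} = \pi \lor \tilde{\pi}$ --- and this is exactly the nontrivial identity you prove, via the characterization of $V_{\pi}$ as the vectors constant on the blocks of $\pi$ together with the chain/connectivity argument showing that a vector constant on the blocks of both $\pi$ and $\tilde{\pi}$ is constant on the blocks of $\pi \lor \tilde{\pi}$ (the same combinatorics that underlies Theorem~\ref{thm31}). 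Your order-reflection step (a $\{0,1\}$-valued block indicator lying in $V_{\pi}$ forces every block of $\pi$ into the block or its complement) and the injectivity of $\pi \mapsto V_{\pi}$ then give the biconditional cleanly in both directions. So the overall strategy --- an order-reversing embedding of $\Pi_n$ into the subspace lattice carrying joins to intersections --- is the same as the paper's, but your version makes explicit the inclusion $V_{\pi} \cap V_{\tilde{\pi}} \subseteq V_{\pi \lor \tilde{\pi}}$ that the paper leaves implicit, which is where the real content lies; the shortcut through Theorem~\ref{thm31} that you mention would require essentially the same connectivity reasoning, so the self-contained linear-algebra route is a reasonable choice.
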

\begin{proof}
Let $\Lambda_{\pi} = ({\bm \lambda}_1, {\bm \lambda}_2, \ldots)$ and $\Lambda_{\tilde{\pi}} = (\tilde{{\bm \lambda}}_1, \tilde{{\bm \lambda}}_2, \ldots)$ the $\vec{{\bm 1}}_n$-partitions 
corresponding to $\pi, \tilde{\pi} \in \Pi_n$ respectively. Notice that $\pi \leq \tilde{\pi}$ if and only if $V_{\tilde{\pi}} \subseteq V_{\pi}.$ Indeed $\Lambda_{\pi} =  \Lambda_{\tilde{\pi}}$ or every column of $\Lambda_{\tilde{\pi}}$ is a non-zero linear combination of columns of $\Lambda_{\pi}$ with coefficients in $\{0,1\}.$ Thus the least upper bound $\pi \lor \tilde{\pi}$  corresponds to the greatest lower bound $V_{\pi} \cap V_{\tilde{\pi}}.$ 
\end{proof}
\begin{cor} \label{cor3.1}
Complementary set partitions correspond to complementary $\vec{{\bm 1}}_n$-partitions.
\end{cor}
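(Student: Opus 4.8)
The plan is to obtain the corollary as the special case $\pi^{\star} = {\mathbf 1}_n$ of Theorem~\ref{mainth}. First I would record that the assignment $\pi \mapsto \Lambda_{\pi}$ of Definition~\ref{3.1} is a bijection between $\Pi_n$ and the set of $\vec{{\bm 1}}_n$-partitions: each row of $\Lambda_{\pi}$ contains exactly one nonzero entry, so the supports of its columns form a genuine set partition of $[n]$, distinct blocks yield distinct columns, and conversely every $\vec{{\bm 1}}_n$-partition is recovered from the partition read off its columns. Composing with $\Lambda \mapsto V_\Lambda$, the map $\pi \mapsto V_{\pi}$ is then well defined on all of $\Pi_n$, and by Example~\ref{3.1} the trivial partition corresponds to $\Lambda_{{\mathbf 1}_n} = \vec{{\bm 1}}_n^{\,\intercal}$, hence to $V_{{\mathbf 1}_n} = {\rm span}(\vec{{\bm 1}}_n)$.

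Next, for arbitrary $\pi, \tilde{\pi} \in \Pi_n$ I would invoke Theorem~\ref{mainth} with $\pi^{\star} = \pi \lor \tilde{\pi}$ and specialize to the case $\pi^{\star} = {\mathbf 1}_n$: the theorem gives $\pi \lor \tilde{\pi} = {\mathbf 1}_n$ if and only if $V_{\pi} \cap V_{\tilde{\pi}} = V_{{\mathbf 1}_n}$. By the definition of complementary set partitions, the left-hand side is precisely the statement that $\pi$ and $\tilde{\pi}$ are complementary; by the definition of complementary $\vec{{\bm 1}}_n$-partitions, the right-hand side (using $V_{{\mathbf 1}_n}$ as identified above) is precisely the statement that $\Lambda_{\pi}$ and $\Lambda_{\tilde{\pi}}$ are complementary. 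Together with the bijection $\pi \mapsto \Lambda_{\pi}$, this establishes the claimed correspondence in both directions.

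The only point requiring care is verifying that the object $V_{{\mathbf 1}_n}$ appearing in the definition of complementary $\vec{{\bm 1}}_n$-partitions is literally $V_{\pi^{\star}}$ for $\pi^{\star} = {\mathbf 1}_n$, so that the specialization of Theorem~\ref{mainth} is legitimate; this is immediate from $\Lambda_{{\mathbf 1}_n} = \vec{{\bm 1}}_n^{\,\intercal}$. I do not anticipate any substantive obstacle here: once Theorem~\ref{mainth} is available the corollary is a one-line specialization, and the bijectivity of $\pi \mapsto \Lambda_{\pi}$ has already been set up in the discussion around Definition~\ref{3.1}. If a fully self-contained argument were preferred, one could instead unwind Theorem~\ref{mainth} directly at $\pi^{\star}={\mathbf 1}_n$, noting that $V_{\pi}\cap V_{\tilde{\pi}}=V_{{\mathbf 1}_n}$ exactly when the only $\{0,1\}$-combinations of columns of $\Lambda_{\pi}$ that are also $\{0,1\}$-combinations of columns of $\Lambda_{\tilde{\pi}}$ are $\vec{{\bm 0}}_n$ and $\vec{{\bm 1}}_n$, which is the column-space translation of $\pi\lor\tilde{\pi}$ having a single block.
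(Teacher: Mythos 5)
Your argument is correct and is exactly the route the paper intends: the corollary is stated as an immediate consequence of Theorem~\ref{mainth}, obtained by specializing $\pi^{\star}={\mathbf 1}_n$ and matching the two definitions of complementarity via $\Lambda_{{\mathbf 1}_n}=\vec{{\bm 1}}_n^{\,\intercal}$. Your additional care about the bijection $\pi\mapsto\Lambda_{\pi}$ is fine but not substantively different from what the paper leaves implicit.
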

The following proposition parallels the swapping property recalled in Section 2. 
\begin{prop}
Suppose $\pi_1, \pi_2 \in \Pi_{n,m}$ whose block cardinalities correspond to the same integer partition $\lambda \vdash n$ regardless of the order of the blocks. 
If there exists a permutation  $\sigma$ that reorders the elements within the blocks of 
$\pi_1$ and $\pi_2$ and 
$V_{{\pi}_1} \cap V_{{\pi}} = V_{{\mathbf 1}_n}$ then 
$V_{{\pi}_2} \cap V_{\tilde{\pi}} = V_{{\mathbf 1}_n},$   where $\tilde{\pi}$ is  obtained from ${\pi}$ by applying the permutation $\sigma$ to the elements within its blocks.
\end{prop}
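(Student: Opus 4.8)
The plan is to realize the reordering permutation $\sigma$ as a linear isomorphism of ${\mathbb R}^n$ and to observe that the three relevant subspaces $V_{\pi_1}$, $V_{\pi}$ and $V_{{\mathbf 1}_n}$ all transform equivariantly under it; the conclusion then follows because a bijective linear map commutes with intersection of subspaces.

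First I would set up the action. For a permutation $\sigma$ of $[n]$, let $P_\sigma$ be the associated permutation matrix acting on column vectors by $(P_\sigma {\bm v})_t = {\bm v}_{\sigma^{-1}(t)}$; it is an invertible linear map sending $\{0,1\}$-vectors to $\{0,1\}$-vectors. If $B \subseteq [n]$ is a block and ${\bm \lambda}_B$ its indicator column (a $1$ in exactly the rows of $B$, as in the definition of $\vec{{\bm 1}}_n$-partitions), then $P_\sigma {\bm \lambda}_B = {\bm \lambda}_{\sigma(B)}$. The hypothesis that $\sigma$ reorders the elements within the blocks of $\pi_1$ turning it into $\pi_2$ means precisely that $\sigma$ maps the blocks of $\pi_1$ bijectively onto the blocks of $\pi_2$ as sets --- this is where the common integer partition $\lambda \vdash n$ of block cardinalities is used, since it guarantees the image blocks have the right sizes and exhaust $\pi_2$. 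Hence $P_\sigma$ carries the set of columns of $\Lambda_{\pi_1}$ onto the set of columns of $\Lambda_{\pi_2}$, possibly in a different order, which is irrelevant for the span; therefore $P_\sigma V_{\pi_1} = V_{\pi_2}$. The same $\sigma$, acting within the blocks of $\pi$, produces $\tilde{\pi}$ by the very definition of $\tilde{\pi}$ in the statement, so the identical argument gives $P_\sigma V_{\pi} = V_{\tilde{\pi}}$. Finally $P_\sigma \vec{{\bm 1}}_n = \vec{{\bm 1}}_n$, whence $P_\sigma V_{{\mathbf 1}_n} = V_{{\mathbf 1}_n}$.

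To finish, I would invoke that a linear isomorphism preserves intersections of subspaces: $P_\sigma(U \cap W) = P_\sigma U \cap P_\sigma W$ for any subspaces $U, W$ (the inclusion $\subseteq$ is immediate, and $\supseteq$ uses injectivity). Applying this with $U = V_{\pi_1}$, $W = V_{\pi}$ and the hypothesis $V_{\pi_1} \cap V_{\pi} = V_{{\mathbf 1}_n}$ gives
\[
V_{\pi_2} \cap V_{\tilde{\pi}} = P_\sigma V_{\pi_1} \cap P_\sigma V_{\pi} = P_\sigma\big(V_{\pi_1} \cap V_{\pi}\big) = P_\sigma V_{{\mathbf 1}_n} = V_{{\mathbf 1}_n},
\]
as claimed. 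Alternatively one could bypass the linear algebra and derive the statement from the classical swapping property of Section 2 together with Corollary \ref{cor3.1}, but the direct argument above is cleaner and self-contained.

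The only real obstacle is bookkeeping: one must check that ``$\sigma$ reorders elements within the blocks'' genuinely forces $\sigma$ to send blocks of $\pi_1$ onto blocks of $\pi_2$ as sets --- rather than merely permuting labels arbitrarily --- and that the canonical representation {\it cr2} used to list the columns of a $\vec{{\bm 1}}_n$-partition affects only the order of those columns, not the subspace they span. Once this is pinned down, equivariance of $V_{(\cdot)}$ under $P_\sigma$ and intersection-preservation of isomorphisms do the rest.
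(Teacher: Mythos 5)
Your proof is correct and takes essentially the same approach as the paper: both realize $\sigma$ as a permutation matrix and transport the hypothesis $V_{\pi_1}\cap V_{\pi}=V_{{\mathbf 1}_n}$ through it (the paper writes $\Lambda_{\pi_1}=P\Lambda_{\pi_2}Q$ and manipulates the associated linear systems, while you phrase it as $P_\sigma$-equivariance of the column spans together with preservation of intersections under a linear isomorphism). Your formulation is, if anything, a slightly cleaner way of making the point that only the row permutation matters, the column permutation being irrelevant to the spans.
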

\begin{proof}
If there exists a permutation $\sigma$ reordering the elements in the blocks of $\pi_1$ and $\pi_2,$ then there exist square permutation matrices $P$ and $Q$ of $n$ and $m$ rows respectively such that $\Lambda_ {\pi_1} = P \Lambda_{\pi_2} Q.$ Thus  $\Lambda_{\pi_1} {\bm x}^{\intercal} = \Lambda_{\pi} {\bm x}^{\intercal}$ is equivalent to  $\Lambda_{\pi_2} {\bm y}^{\intercal} = \Lambda_{\tilde{\pi}} {\bm y}^{\intercal}$ where  $\Lambda_{\tilde{\pi}} = P \Lambda_{{\pi}} Q. $ Indeed as $P$ and $Q$ are permutation matrices, then $P^2 = I_n$ and $Q^2=I_m$ and
$\Lambda_{\pi_1} {\bm x}^{\intercal} = \Lambda_{\pi} {\bm x}^{\intercal} \Leftrightarrow  P \Lambda_{\pi_2} Q {\bm x}^{\intercal} = P P \Lambda_{\pi} Q Q {\bm x}^{\intercal} \Leftrightarrow P \Lambda_{\pi_2} Q {\bm x}^{\intercal} = P \Lambda_{\tilde{\pi}} Q {\bm x}^{\intercal} \Leftrightarrow
\Lambda_{\pi_2} {\bm y}^{\intercal} = \Lambda_{\tilde{\pi}} {\bm y}^{\intercal}$
where ${\bm y}^{\intercal} = Q {\bm x}^{\intercal}.$ Therefore, since 
 $V_{{\pi}_1} \cap V_{{\pi}} = V_{{\mathbf 1}_n}$ by hypothesis,   $V_{{\pi}_2} \cap V_{\tilde{\pi}} = V_{{\mathbf 1}_n}$ follows.
\end{proof}
\begin{example} \label{ex3.4}
In Example \ref{swapex}, $\pi=1|234$ is obtained from $\tilde{\pi}=123|4$ by swapping $1$ and $4.$  Both have $(3,1) \vdash 4$ as block cardinalities, regardless of the order of the blocks, and 
$${\Lambda}_{1|234} =\begin{pmatrix} 1 & 0 \\ 0 & 1 \\ 0 & 1\\ 0 & 1 \end{pmatrix}  = P {\Lambda}_{123|4} Q =  \begin{pmatrix} 0 & 0 & 0 & 1 \\  0 & 1 & 0 & 0 \\  0 & 0 & 1 & 0  \\ 1 & 0 & 0 & 0 \end{pmatrix} \begin{pmatrix} 1 & 0 \\ 1 & 0 \\ 1 & 0\\ 0 & 1 \end{pmatrix} \begin{pmatrix} 0 & 1 \\ 1 & 0 \end{pmatrix}. $$
\end{example}
Intersection matrices of set partitions can be recovered by multiplying the corresponding $\vec{{\bm 1}}_n$-partitions with suitable permutation matrices, as stated in the following proposition.
\begin{prop}
If $\pi_1 \in \Pi_{n,k}$ and $\pi_2 \in \Pi_{n,l},$  then
$\pi_1 \cap \pi_2 = P_1 ({\Lambda}_{\pi_1}^{\!\!\intercal}{\Lambda}_{\pi_2})P_2,$ where $\pi_1 \cap \pi_2$ is the $[k \times l]$ intersection matrix of Definition \ref{intmat} and $P_1, P_2$ are permutation matrices of dimensions
$[ k \times k ]$ and $[ l \times l ]$ respectively. 
\end{prop}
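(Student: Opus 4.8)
The plan is to show that the matrix product $\Lambda_{\pi_1}^{\!\!\intercal}\Lambda_{\pi_2}$ already equals the intersection matrix up to a relabelling of blocks, i.e. up to left and right multiplication by permutation matrices. The starting observation is an entrywise computation: if $\Lambda_{\pi_1} = ({\bm \lambda}_1, \ldots, {\bm \lambda}_k)$ with $({\bm \lambda}_i)_t = \mathbbm{1}[t \in B_i]$ and $\Lambda_{\pi_2} = (\tilde{\bm \lambda}_1, \ldots, \tilde{\bm \lambda}_l)$ with $(\tilde{\bm \lambda}_j)_t = \mathbbm{1}[t \in C_j]$, then the $(i,j)$-th entry of $\Lambda_{\pi_1}^{\!\!\intercal}\Lambda_{\pi_2}$ is
$$
\big(\Lambda_{\pi_1}^{\!\!\intercal}\Lambda_{\pi_2}\big)_{ij} = {\bm \lambda}_i^{\!\intercal}\tilde{\bm \lambda}_j = \sum_{t=1}^n \mathbbm{1}[t \in B_i]\,\mathbbm{1}[t \in C_j] = |B_i \cap C_j|.
$$
So $\Lambda_{\pi_1}^{\!\!\intercal}\Lambda_{\pi_2}$ is literally an intersection matrix of $\pi_1$ and $\pi_2$, for the particular ordering of blocks induced by the canonical representation ({\it cr2}) that fixes the column order of each $\vec{{\bm 1}}_n$-partition.

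Next I would invoke the fact, recalled right after Definition \ref{intmat}, that the intersection matrix $\pi_1 \cap \pi_2$ of Definition \ref{intmat} is only defined up to permutation of the blocks of $\pi_1$ and of $\pi_2$ — equivalently, up to permutation of the rows and of the columns. Concretely, if $M_{[k\times l]}$ with $M_{ij} = |B_i \cap C_j|$ is the intersection matrix for some chosen block orderings, and $M'$ is the one obtained after reordering the blocks of $\pi_1$ by a permutation with matrix $P_1$ and the blocks of $\pi_2$ by a permutation with matrix $P_2$, then $M' = P_1 M P_2$ (with the usual convention on how permutation matrices act on rows/columns). Since the ordering built into $\Lambda_{\pi_1}$ and $\Lambda_{\pi_2}$ via ({\it cr2}) is one admissible ordering, it differs from any other admissible ordering exactly by such a pair $(P_1, P_2)$. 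Combining this with the entrywise identity above yields $\pi_1 \cap \pi_2 = P_1\big(\Lambda_{\pi_1}^{\!\!\intercal}\Lambda_{\pi_2}\big)P_2$ for suitable permutation matrices $P_1 \in \{0,1\}^{k\times k}$, $P_2 \in \{0,1\}^{l\times l}$, which is the claim.

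The only genuinely delicate point — and the place I would be most careful — is bookkeeping the side on which each permutation acts and making sure the convention matches Definition \ref{intmat}: permuting the rows of a matrix is left multiplication by a permutation matrix, permuting the columns is right multiplication, so $P_1$ must encode the permutation of $\pi_1$'s blocks and $P_2$ that of $\pi_2$'s blocks, with one of them possibly appearing as a transpose depending on the orientation convention. There is no real mathematical obstacle; it is essentially the remark that $\Lambda_{\pi}^{\!\intercal}$ has as its rows the block-incidence vectors, so $\Lambda_{\pi_1}^{\!\!\intercal}\Lambda_{\pi_2}$ records all pairwise block intersection sizes, and the permutation matrices absorb the ambiguity in how the blocks of each partition are listed. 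I would also note in passing that if one fixes ({\it cr2}) as the canonical ordering for both $\pi_1$ and $\pi_2$, then $P_1 = I_k$ and $P_2 = I_l$ and the identity becomes the clean equality $\pi_1 \cap \pi_2 = \Lambda_{\pi_1}^{\!\!\intercal}\Lambda_{\pi_2}$.
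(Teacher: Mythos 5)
Your proposal is correct and follows essentially the same route as the paper: both compute entrywise that $({\Lambda}_{\pi_1}^{\!\!\intercal}{\Lambda}_{\pi_2})_{ij}=|B_i\cap C_j|$ and then absorb the discrepancy between the block ordering built into the $\vec{{\bm 1}}_n$-partitions (\emph{cr2}) and the ordering used in Definition \ref{intmat} (\emph{cr1}) by the permutation matrices $P_1$ and $P_2$. The paper's proof is just a less formal phrasing of the same argument (``coupled $1$'s'' in place of your indicator-sum computation), including your observation that $P_1=I_k$, $P_2=I_l$ when the two canonical representations agree.
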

\begin{proof} Since ${\Lambda}_{\pi_1}$ is a $[n \times k]$ matrix and 
${\Lambda}_{\pi_2}$ is a $[n \times l]$ matrix, then ${\Lambda}_{\pi_1}^{\!\!\intercal} {\Lambda}_{\pi_2}$ is a $[k \times l]$ matrix such that  the $(i,j)$-th entry gives the number of coupled $1'$s between the $i$-th column of ${\Lambda}_{\pi_1},$ corresponding to the $i$-th block of $\pi_1,$ and the $j$-th column of ${\Lambda}_{\pi_2},$ corresponding to the $j$-th block of $\pi_2.$ If the canonical representations  {\it cr1} and {\it cr2} of $\pi_1$ and $\pi_2$ agree, the $(i,j)$-th entry of ${\Lambda}_{\pi_1}^{\!\!\intercal}{\Lambda}_{\pi_2}$ gives  the $(i,j)$-th entry of $\pi_1 \cap \pi_2,$ and $P_1 = I_k, P_2 =I_l$  identity matrices. Otherwise, there exits a permutation of blocks between $\pi_1$ and $\pi_2.$ Therefore $\pi_1 \cap \pi_2$ differs from ${\Lambda}_{\pi_1}^{\!\!\intercal}{\Lambda}_{\pi_2}$ by suitable permutations of rows and/or columns, which are encoded by left multiplication with a suitable permutation matrix $P_1$ (possibly equal to $I_k$) and right multiplication with a suitable permutation matrix $P_2$  (possibly equal to $I_l$.) 
\end{proof}
As a corollary, the equivalence classes of complementary set partitions can be constructed using products of $\vec{{\bm 1}}_n$-partitions. Indeed, given ${\pi}$, the partitions $\pi_1$ and $\pi_2$ are equivalent if ${\Lambda}_{\tilde{\pi}}^{\!\!\intercal} {\Lambda}_{\pi_1}$ is equal to ${\Lambda}_{\tilde{\pi}}^{\!\!\intercal} {\Lambda}_{\pi_2}$ after suitably permuting their rows and/or columns.
\begin{example}
The partitions $\pi_1=134|2$ and $\pi_2=234|1$ are in the same equivalence class with representative $\pi=124|3$ since
\begin{equation}
\pi_1 \cap \pi = \begin{pmatrix} 
2 & 1 \\ 
1 & 0 
\end{pmatrix} = \pi_2 \cap \pi = 
\Lambda_{\pi_1}^{\!\!\intercal} \Lambda_{\pi}  = \begin{pmatrix} 
0 & 1 \\ 
1 & 0 
\end{pmatrix} \Lambda_{\pi_2}^{\!\!\intercal} \Lambda_{\pi} 
\begin{pmatrix} 
1 & 0 \\ 
0 & 1 
\end{pmatrix}.
\end{equation}
\end{example}  
\subparagraph{The algorithm {\tt NullSpace}: {\tt csp\_NS}.} According to Corollary
\ref{cor3.1}, given two $\vec{{\bm 1}}_n$-partitions $\Lambda_{\pi}, \Lambda_{\tilde{\pi}},$ one way to determine if they are complementary is to compute a basis of  $V_{\pi} \cap V_{\tilde{\pi}}$ and check if this basis reduces to $\vec{{\bm 1}}_n.$ A basis for  $V_{\pi} \cap V_{\tilde{\pi}}$ can be  computed using the nullspace\footnote{The nullspace of a matrix $A$ is the set of solutions to the equation $A {\bm x} = {\mathbf 0}.$} of the block matrix $[{\Lambda}_{\pi} | - {\Lambda}_{\tilde{\pi}}].$ Therefore, given a $\vec{{\bm 1}}_n$-partition $\Lambda_{\pi},$ to enumerate all its complementary $\Lambda_{\tilde{\pi}}$ it is necessary to find all pairs $(\Lambda_{\pi}, \Lambda_{\tilde{\pi}})$ such that  the nullspace  of $[{\Lambda}_{\pi} | - {\Lambda}_{\tilde{\pi}}]$ is  $V_{{\mathbf 1}_n}.$ Because computing a nullspace numerically is time-intensive, various strategies can be employed to speed up the overall process. For example, some $\vec{{\bm 1}}_n$-partitions can be excluded initially, as they are clearly not complementary to  $\Lambda_{\pi}.$ This is the case of $\Lambda_{{\mathbf 0}_n},$ since $V_{\pi} \subseteq V_{{\mathbf 0}_n} = {\mathbb R}^n,$ but it also applies to all $\vec{{\bm 1}}_n$-partitions $\Lambda_{\tilde{\pi}}$ such that:  $\Lambda_{\tilde{\pi}}$ and  $\Lambda_{\pi}$ share at least one column;   at least one column of $\Lambda_{\tilde{\pi}}$ can be expressed  as linear combination of some columns of $\Lambda_{\pi}$ or viceversa;   ${\rm dim}(V_{\pi}) + 
{\rm dim}(V_{\tilde{\pi}}) \geq n + k$ with $k=2, \ldots,n$ as 
${\rm dim}(V_{\pi} \cap V_{\tilde{\pi}})=k > 1.$ 
While using $\vec{{\bm 1}}_n$-partitions and subspaces simplifies the search for complementary $\vec{{\bm 1}}_n$-partitions to a nullspace calculation, the algorithm faces significant challenges in terms of computational time (see Table 1, algorithm {\tt csp\_NS}). 
\subparagraph{Generalized cumulant.}
The generalized cumulant of ${\bm X} = (X_1, \ldots, X_n)$ of order $n$ and size $m,$ corresponding to  the $\vec{{\bm 1}}_n$-partition $\Lambda_{\pi} = ({\bm \lambda}_1, \ldots, {\bm \lambda}_m )$  is 
\begin{equation}
{\mathfrak K}_{\Lambda_{\pi}}({\bm X}) = {\mathfrak K}_{{\bm \lambda}^{\intercal}_1; \ldots; {\bm \lambda}^{\intercal}_m}({\bm X}) = {\mathcal K}( {\bm X}^{{\bm \lambda}_1}, \ldots,  {\bm X}^{{\bm \lambda}_m}).
\label{gencum1}
\end{equation}
Notice that $\mathfrak{K}_{\Lambda_{\pi}}({\bm X}) = \mathfrak{K}({\pi})$ in \eqref{gencum}. Moreover,  $\mathfrak{K}_{\vec{{\bm 1}}^{\intercal}_n}({\bm X})$ corresponds to the joint moment in \eqref{jointM} and $\mathfrak{K}_{{\bm e}_1^{\intercal}; \ldots, {\bm e}_n^{\intercal}}({\bm X})$ corresponds to the joint cumulant in \eqref{jointcum}, with $({\bm e}_1, \ldots, {\bm e}_n)$ the standard basis  of ${\mathbb R}^n.$ If $\Lambda_{\pi} = ({\bm \lambda}_1, \ldots, {\bm \lambda}_m) \vdash 
\vec{{\bm 1}}_n$ the expression of the generalized cumulant in terms of joint moments  is
\begin{equation}
{\mathfrak K}_{\Lambda_{\pi}}({\bm X}) = \sum_{\Lambda^{\star} \in {\mathcal C}_{\pi}} 
\kappa_{\bm X}(\Lambda^{\star})
\label{gcum2}
\end{equation}
where  ${\mathcal C}_{\pi} = \{\Lambda^{\star} = (\boldsymbol{\lambda}_1^{\star},\boldsymbol{\lambda}_2^{\star},\ldots) \vdash \vec{{\bm 1}}_n | V_{\pi^{\star}} \cap V_{\pi} = V_{{\bm 1}_n}  \}$ with  $V_{\pi^{\star}}, V_{\pi}$ column spans of $\Lambda^{\star},\Lambda_{\pi}$ respectively and $\kappa_{\bm X}(\Lambda^{\star}) = \kappa_{\boldsymbol{\lambda}_1^{\star}}({\bm X}) \kappa_{\boldsymbol{\lambda}_2^{\star}}({\bm X}) \cdots.$ and  Formula \eqref{gcum2} parallels formula \eqref{gencumincumul}. The proof is given in the Appendix.

\subsection{Labelling rules and transformations of $\vec{{\bm 1}}_n$-partitions}
In the following, using complementary $\vec{{\bm 1}}_n$-partitions, we provide a formula to express generalized multivariate cumulants using multivariate cumulants. To this aim, different subscripts are assigned to repeated r.v.'s so that eq. \eqref{gcum2} can be applied. We then show how to recover the initial information on repeated r.v.'s  in order to convert the joint moments in \eqref{gcum2} into multivariate moments. This method is suitable for implementation in non-symbolic platforms like {\tt R}.
\begin{example} \label{gen1}
For the computation of ${\rm cov}(X_1, X_2^2),$ consider 
${\rm cov}(Y_1, Y_2 Y_3) = {\mathfrak K}_{\scriptscriptstyle{100 ; 011}} = \kappa_{\scriptscriptstyle{111}} + \kappa_{\scriptscriptstyle{101}} \kappa_{\scriptscriptstyle{010}}  + \kappa_{\scriptscriptstyle{110}} \kappa_{\scriptscriptstyle{001}}.$
To obtain ${\rm cov}(X_1, X_2^2)$ consider the cumulant expansion of ${\rm cov}(Y_1,$ $Y_2 Y_3) $ where the last two binary indices are summed. This yields ${\rm cov}(X_1,X_2^2)  = \kappa_{\scriptscriptstyle{12}} + 2 \kappa_{\scriptscriptstyle{11}} \kappa_{\scriptscriptstyle{01}}.$
\end{example}
\noindent 
To implement this strategy, we define appropriate mappings between multi-index partitions and  $\vec{{\bm 1}}_n$-partitions. In the following let ${\mathcal L}_{{\bm i}}$ denote  the set of all multi-index partitions $\Lambda =  ({\bm \lambda}_1^{r_1}, \ldots, {\bm \lambda}_m^{r_m}) \vdash {\bm i} \in {\mathbb N}_0^n$ and let ${\mathcal M}_{|{\bm i}|}$ denote the set of all $\vec{{\bm 1}}_{|{\bm i}|}$-partitions $\Lambda_{\pi}$  with $|{\bm i}|=r_1 |{\bm \lambda}_1| + \ldots + r_m |{\bm \lambda}_m|.$ 
\begin{defn}[Canonical transformation of a multi-index partition] \label{6.2}
The canonical transformation $\varphi: {\mathcal L}_{{\bm i}} \mapsto  {\mathcal M}_{|{\bm i}|}$ maps  a multi-index partition $\Lambda \in {\mathcal L}_{{\bm i}}$ to a $\vec{{\bm 1}}_{|{\bm i}|}$-partition $\Lambda_{\pi} \in {\mathcal M}_{|{\bm i}|}$ 
where for $i=1, \ldots, r_1$
$$({\bm s}_i)_j = \left\{ \begin{array}{ll} 
1 & j=(i-1) |{\bm \lambda}_1|+ 1, \ldots, i |{\bm \lambda}_1| \\
0 & {\rm otherwise}
\end{array}\right.$$
and for $q=2, \ldots, m, t=1, \ldots, r_{q}$ and $m_{q,t}=r_1 |{\bm \lambda}_1| + \cdots + r_{q-1} |{\bm \lambda}_{q-1}| + (t-1) |{\bm \lambda}_{q}|$
$$({\bm s}_{r_1+\cdots+r_{q-1}+t})_j = \left\{ \begin{array}{ll} 
1 & j=m_{q,t}+1, \ldots, m_{q,t}+ |{\bm \lambda}_{q}| \\
0 & {\rm otherwise}.
\end{array}\right.$$
\end{defn}
\begin{example}\label{6.5}
Consider $M=\{1^{(1)},2^{(2)},3^{(2)}\}$ and the subdivision $S= 1|2|2|33$ corresponding to the multi-index partition ${\Lambda} = ({\bm \lambda}_1, {\bm \lambda}_2^2,{\bm \lambda}_3) \vdash (1,2,2)$ with $ {\bm \lambda}_1=(1,0,0)^{\intercal}, {\bm \lambda}_2 = (0,1,0)^{\intercal}, {\bm \lambda}_3 = (0,0,2)^{\intercal}.$ The $\vec{{\bm 1}}_5$-partition $\Lambda_{1|2|3|45} = 10000|01000|00100|00011$ is the canonical transformation of $\Lambda$ through $\varphi.$  
\end{example}
To transform a $\vec{{\bm 1}}_{|{\bm i}|}$-partition  $\Lambda_{\pi}\in {\mathcal M}_{|{\bm i}|}$ into a multi-index partition  $\Lambda \in {\mathcal L}_{\bm i},$ a suitable grouping rule is required for the rows of $\Lambda_{\pi}$. 
\begin{defn} [Labeling rule] \label{6.3}
Given a multiset $M = \{1^{(i_1)}, \ldots,n^{(i_n)} \}$  the labeling rule induced by $M$ is defined as $\sigma_{{\bm i}}: [p] \mapsto [n]$ with $p = |{\bm i}| \geq n$  such that  $\sigma^{-1}_{\bm i}(k) = \{ t_{k-1}+1, \ldots, t_k\}$ for $k \in [n]$ with
$t_k = \sum_{j=1}^k i_j$ and  $t_0=0.$ 
\end{defn}
\begin{example}\label{6.4}
With reference to the multiset $M=\{1^{(1)},2^{(2)},3^{(2)}\}$ in Example \ref{6.5} the labeling rule  $\sigma_{(1,2,2)}: \{1,2,3,4,5\} \mapsto \{1,2,3\}$ is defined by $\sigma_{(1,2,2)}(1) =1, \sigma_{(1,2,2)}(2)=\sigma_{(1,2,2)}(3) = 2, \sigma_{(1,2,2)}(4) = \sigma_{(1,2,2)}(5) = 3.$
\end{example} 
A multi-index partition  ${\Lambda} \in {\mathcal L}_{\bm i}$ can be constructed from a 
$\vec{{\bm 1}}_{|{\bm i}|}$-partition $\Lambda_{{\pi}} \in {\mathcal M}_{|{\bm i}|}$
by applying a labeling rule $\sigma_{\bm i},$ as follows. 
\begin{defn} [Reverse transformation under a labeling map] \label{5.4}
The reverse transformation $\Phi_{{\sigma}_{\bm i}}(\Lambda_{{\pi}}): {\mathcal M}_{|{\bm i}|} \mapsto {\mathcal L}_{\bm i}$  under the labeling rule ${\sigma}_{\bm i}$ maps a $\vec{{\bm 1}}_{|{\bm i}|}$-partition $\Lambda_{{\pi}} = ({{\bm s}}_1, \ldots, {{\bm s}}_{m}) \in {\mathcal M}_{|{\bm i}|}$ 
into a  multi-index partition ${\Lambda}=({\bm \lambda}_1, \ldots, {\bm \lambda}_m) \in {\mathcal L}_{\bm i}$  where for each $q=1, \ldots, m$ and  $k=1, \ldots,n$ the entries of 
${\bm \lambda}_q$ are given by  $({{\bm \lambda}}_q)_k = \sum_{t \in \sigma_{\bm i}^{-1}(k)} ({{\bm s}}_q)_t.$  
\end{defn}
\begin{example}
Let $\Lambda_{13|24|5} = (\bm{s}_1, \bm{s}_2, \bm{s}_3) \in \mathcal{M}_5$ be a $\vec{\bm{1}}_5$-partition, where ${\bm s}_1 = (1,0,1,0,0)^{\intercal},$ $ {\bm s}_2=(0,1,0,1,0)^{\intercal}, {\bm s}_3=(0,0,0,0,1)^{\intercal}$. 
Apply the reverse transformation $\Phi_{{\sigma}_{\bm i}}$ with $\bm{i} = (1,2,2)$ using the labeling rule $\sigma_{(1,2,2)}$ of Example~\ref{6.4}. The result is 
$ {\Lambda} =\Phi_{\sigma_{(1,2,2)}}({\Lambda}_{13|24|5}) = ({\bm \lambda}_1, {\bm \lambda}_2, {\bm \lambda}_3) \in {\mathcal L}_{(1,2,2)}$ with  ${\bm \lambda}_1 = (1,1,0)^{\intercal}, {\bm \lambda}_2= (0,1,1)^{\intercal}, {\bm \lambda}_3 = (0,0,1)^{\intercal}.$ Indeed, each $\bm{\lambda}_q$ is computed by summing the components of $\bm{s}_q$ indexed by $\sigma_{\bm{i}}^{-1}(k)$, that is the second and the third row of $\Lambda_{13|24|5}$ corresponding to  $\sigma_{\bm i}^{-1}(2)=\{2,3\}$   as well as the fourth and the fifth row corresponding to $\sigma_{\bm i}^{-1}(3)=\{4,5 \}.$ Distinct $\vec{\bm{1}}_5$-partitions can be mapped to the same multi-index partition $\Lambda \in \mathcal{L}_{(1,2,2)}$ via the reverse transformation $\Phi_{\sigma_{(1,2,2)}}$. Examples include:  $\Lambda_{12|34|5} = 11000|00110|00001, \Lambda_{12|35|4}=11000|00101|00010,$ and $ \Lambda_{13|25|4} = 10100|01001|00010.$ 
\end{example}
 Using the labeling rule $\sigma_{{\bm i}},$  a partition $\pi = B_1| \ldots| B_m \in \Pi_{|{\bm i}|,m}$ induces a subdivision $S=M_1|M_2|\ldots|M_m$ of $M=\{1^{(i_1)}, \ldots, n^{(i_n)}\}$ as follows: for each $j \in [p]$, where $p = |\bm{i}|$, and each $q \in [m]$, the label $\sigma_{\bm{i}}(j) \in \bar{M}_q$ if $j \in B_q$.
By Definition \ref{5.4},  since $|\sigma^{-1}_{\bm i}(k)|=i_k,$ the integer $\sum_{t \in \sigma_{\bm i}^{-1}(k)} ({{\bm s}}_q)_t$ counts the number of occurrences of the element $k \in [n]$  within $M_q$ corresponding to $B_q.$   
\begin{lemma} \label{6.1}
If $\Lambda = ({\bm \lambda}_1^{r_1}, \ldots, {\bm \lambda}_m^{r_m}) \in {\mathcal L}_{\bm i}$ then $|\Phi^{-1}_{\sigma_{\bm i}}(\Lambda)| = d_{\Lambda},$
where $\Phi_{\sigma_{\bm i}}^{-1}({{\Lambda}}) = \{ \Lambda_{{\pi}} \in {\mathcal M}_{|{\bm i}|} | \Phi_{\sigma_{\bm i}} (\Lambda_{{\pi}})= \Lambda \}$ and  $d_{\Lambda}$ is given in \eqref{multfunct}.
\end{lemma}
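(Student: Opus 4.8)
The plan is to reduce the statement to an elementary counting problem about coloured set partitions. First I would unwind the two transformations involved. Think of each element of $[p]$, $p=|{\bm i}|$, as carrying a \emph{colour} in $[n]$, the colour of $t$ being $\sigma_{\bm i}(t)$; then the colour-$k$ class is the fibre $\sigma_{\bm i}^{-1}(k)$, which has $i_k$ elements. A $\vec{{\bm 1}}_{p}$-partition $\Lambda_{\pi}=({\bm s}_1,\ldots,{\bm s}_l)\in{\mathcal M}_{|{\bm i}|}$ (its columns sum to $\vec{{\bm 1}}_p$, hence are automatically pairwise distinct) is nothing but a set partition $\pi=B_1|\cdots|B_l$ of $[p]$. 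By Definition~\ref{5.4} the $q$-th column of $\Phi_{\sigma_{\bm i}}(\Lambda_{\pi})$ has $k$-th entry $\sum_{t\in\sigma_{\bm i}^{-1}(k)}({\bm s}_q)_t=|B_q\cap\sigma_{\bm i}^{-1}(k)|$, i.e.\ it is the vector counting how many elements of colour $k$ lie in $B_q$. Consequently $\Lambda_{\pi}\in\Phi_{\sigma_{\bm i}}^{-1}(\Lambda)$ if and only if the multiset of colour-count vectors of the blocks of $\pi$ coincides with the multiset of columns of $\Lambda$, namely $r_q$ copies of ${\bm \lambda}_q$ for $q=1,\ldots,m$. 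So $|\Phi_{\sigma_{\bm i}}^{-1}(\Lambda)|$ equals the number of partitions of the $p$ coloured elements ($i_k$ of colour $k$) into an unordered family of $l=l(\Lambda)$ blocks, of which exactly $r_q$ have colour-count vector ${\bm \lambda}_q$.

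Next I would count these partitions by temporarily making the blocks distinguishable. Introduce $l$ labelled boxes and declare $r_q$ of them to be of type ${\bm \lambda}_q$; a labelled filling assigns the coloured elements to the boxes so that a type-${\bm \lambda}_q$ box receives exactly $({\bm \lambda}_q)_k$ elements of colour $k$. For a fixed colour $k$, the identity $\sum_q r_q({\bm \lambda}_q)_k=i_k$ (which holds because $\Lambda\vdash{\bm i}$) shows that the $i_k$ elements of colour $k$ can indeed be distributed, and the number of ways to do so is the multinomial coefficient $i_k!\big/\prod_q\big(({\bm \lambda}_q)_k!\big)^{r_q}$. Different colours are filled independently, so the number of labelled fillings is $\prod_{k=1}^{n} i_k!\big/\prod_q\big(({\bm \lambda}_q)_k!\big)^{r_q}={\bm i}!\big/\prod_q({\bm \lambda}_q!)^{r_q}$, where I use ${\bm i}!=\prod_k i_k!$ and ${\bm \lambda}_q!=\prod_k({\bm \lambda}_q)_k!$.

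Finally I would pass from labelled fillings back to genuine set partitions. Two labelled fillings yield the same unordered partition exactly when one is obtained from the other by permuting boxes carrying the same type vector ${\bm \lambda}_q$; because the blocks of a set partition are nonempty and pairwise disjoint, a block is determined by the set of elements it contains, so such a permutation alters the labelled filling whenever it is nontrivial. Hence the group $\prod_q S_{r_q}$, of order $\prod_q r_q!$, acts freely on the set of labelled fillings and its orbit set is precisely $\Phi_{\sigma_{\bm i}}^{-1}(\Lambda)$. Dividing, $|\Phi_{\sigma_{\bm i}}^{-1}(\Lambda)|={\bm i}!\prod_q\frac{1}{({\bm \lambda}_q!)^{r_q}\,r_q!}=d_{\Lambda}$ by \eqref{multfunct}.

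The two multinomial identities are routine; the single point needing care, and the only real obstacle, is the last step's assertion that the relabelling symmetry acts freely and captures all coincidences among labelled fillings. This rests entirely on the blocks of $\pi$ being nonempty and pairwise disjoint (so that each block is recovered from its element set), and once this is made explicit the stated count is forced.
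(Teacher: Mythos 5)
Your proposal is correct and follows essentially the same route as the paper's proof: a per-colour multinomial count $i_k!/\prod_q\big(({\bm\lambda}_q)_k!\big)^{r_q}$ of how the fibre $\sigma_{\bm i}^{-1}(k)$ is split among the blocks, multiplied over $k$, and then divided by $\prod_q r_q!$ to account for repeated column types. The only (welcome) difference is that you make the division step precise via the free action of $\prod_q S_{r_q}$ on labelled fillings, where the paper simply remarks that equal multisets in the subdivision cause the overcount.
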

\begin{proof}
Let  $\Lambda = ({\bm \lambda}_1, \ldots, {\bm \lambda}_{l(\Lambda)}) \vdash {\bm i}$ with
${\bm \lambda}_1 \geq \ldots \geq {\bm \lambda}_{l(\lambda)}$ and 
denote with $S$ the subdivision corresponding to $\Lambda.$ Note that the $\vec{{\bm 1}}_{|{\bm i}|}$-partitions in  $\Phi_{\bm i}^{-1}({{\Lambda}})$  encode all the partitions ${\pi}$ such that  if the integers in the blocks of ${\pi}$ are replaced by their images under  $\sigma_{\bm i},$ then the subdivision $S$ is recovered.  Thus
\begin{equation}
a_k = \frac{i_k!}{\prod_{q=1}^{l(\Lambda)}  [({\bm{\lambda}}_q)_k]!} \quad k=1, \ldots,m
\label{jgroup1}
\end{equation}
is the number of ways to split the $i_k$ elements of $\sigma_{\bm i}^{-1}(k)$ among the blocks $B_1,\ldots, B_{l(\Lambda)}$ of $\pi$, knowing that  $({\bm \lambda}_q)_k$ elements in $B_q$ are mapped in $k.$ Using \eqref{jgroup1} and grouping together the equal columns of $\Lambda,$ the product $\prod_{k=1}^n a_k$ reads 
\begin{equation}
\prod_{k=1}^n a_k = \frac{{\bm i}!}{({\bm \lambda}_1!)^{r_1} \cdots ({\bm \lambda}_m!)^{r_m}}.
\label{jgroup2}
\end{equation}
Recall that the blocks $B_1,\ldots, B_{l(\Lambda)}$ correspond to the multisets $M_1,\ldots, M_{l(\Lambda)}$  
of $S$ when the  $p=|{\bm i}|$ integers in $B_1,\ldots, B_{l(\Lambda)}$ are replaced by their images under $\sigma_{\bm i}.$  The result follows by observing that when considering the number of ways to split all the elements in $\{1^{(i_1)}, \ldots, n^{(i_n)}\}$ among the multisets of $S$, the product in \eqref{jgroup2} is overcounted by the permutations of equal multisets, whose number is $r_1! \cdots r_m!$. Indeed, among the multisets of $S$, there may be copies, and once a multiset is filled, its copies are automatically determined.
\end{proof}
The following theorem gives generalized multivariate cumulant in terms of multivariate cumulants.
\begin{thm} \label{thm4.2}Let $\Lambda =({\bm \lambda}_1^{r_1}, \ldots, {\bm \lambda}_m^{r_m}) \vdash {\bm i} \in \mathbb{N}_0^n$, and let $\Lambda_{\pi} = \varphi(\Lambda) \in {\mathcal M}_{|{\bm i}|}$ be the canonical transformation of $\Lambda$. The generalized multivariate cumulant associated with $\Lambda$ satisfies
\begin{equation}
{\mathfrak K}_{\Lambda}({\bm X}) = {\mathfrak K}^{r_1| \ldots| r_m}_{{\bm \lambda}^{\!\intercal}_1; \ldots; {\bm \lambda}^{\!\intercal}_m}({\bm X}) 
= \sum_{\widetilde{\Lambda} = (\tilde{{\bm \lambda}}^{c_1}_1,\tilde{{\bm \lambda}}^{c_2}_2, \ldots ) \in {\mathfrak C}_{\Lambda}}  
a_{\tilde{\Lambda}} \, [\kappa_{\tilde{{\bm \lambda}}_1}({\bm X})]^{c_1}  [\kappa_{\tilde{{\bm \lambda}}_2}({\bm X})]^{c_2} \cdots
\label{genmulcum}
\end{equation}
where ${\mathfrak C}_{\Lambda} =\{ \tilde{\Lambda} \vdash {\bm i} \mid \Phi_{\sigma_{\bm i}}({\Lambda}_{\tilde{\pi}}) = \tilde{\Lambda} \text{ for } \Lambda_{\tilde{\pi}} \in {\mathcal C}_{\pi} \}$, ${\mathcal C}_{\pi}$ is the set of all complementary ${\bm 1}_{|{\bm i}|}$-partitions of $\Lambda_{\pi}$, $\Phi_{\sigma_{\bm i}}$ is the reverse transformation under $\sigma_{\bm i}$, and
$a_{\tilde{\Lambda}} = \left| \Phi_{\sigma_{\bm i}}^{-1}({\tilde{\Lambda}}) \cap {\mathcal C}_{\pi} \right|.$
\end{thm}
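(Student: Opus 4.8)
Here is how I would approach the proof.

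The plan is to reduce \eqref{genmulcum} to the already established cumulant expansion \eqref{gcum2} applied to an auxiliary generalized cumulant built from $p=|{\bm i}|$ \emph{distinct} dummy random variables, and then to fold the resulting identity back through the labeling rule $\sigma_{\bm i}$. First I would introduce $Y_1,\ldots,Y_p$ with pairwise distinct subscripts and record the compatibility of the combinatorial maps: the canonical transformation $\varphi$ is a section of the reverse transformation, $\Phi_{\sigma_{\bm i}}\big(\varphi(\Lambda)\big)=\Lambda$. Unwinding Definitions~\ref{6.2}, \ref{6.3} and~\ref{5.4}, if $\Lambda_{\pi}=\varphi(\Lambda)=({\bm s}_1,\ldots,{\bm s}_{l(\Lambda)})$ and $B_q$ is the block of $\pi$ carrying the column ${\bm s}_q$, then $\prod_{t\in B_q}X_{\sigma_{\bm i}(t)}={\bm X}^{{\bm \lambda}_{q'}}$ for the column ${\bm \lambda}_{q'}$ of $\Lambda$ that $\varphi$ associates with $B_q$. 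Hence the substitution $Y_t\mapsto X_{\sigma_{\bm i}(t)}$, $t\in[p]$, carries the generalized cumulant ${\mathfrak K}_{\Lambda_{\pi}}({\bm Y})={\mathcal K}\big(\prod_{t\in B_1}Y_t,\ldots,\prod_{t\in B_{l(\Lambda)}}Y_t\big)$ to ${\mathfrak K}_{\Lambda}({\bm X})$; this only uses that joint cumulants depend on the joint law and that the substitution is a deterministic functional identification of the $Y$'s with (possibly repeated) $X$'s.

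Next, since the columns of $\Lambda_{\pi}$ are pairwise distinct binary vectors, $\Lambda_{\pi}$ is the $\vec{{\bm 1}}_p$-partition of a genuine set partition $\pi\in\Pi_p$, so \eqref{gcum2} gives ${\mathfrak K}_{\Lambda_{\pi}}({\bm Y})=\sum_{\Lambda^{\star}\in{\mathcal C}_{\pi}}\kappa_{\bm Y}(\Lambda^{\star})$, the sum running over all complementary $\vec{{\bm 1}}_p$-partitions of $\Lambda_{\pi}$. I would then apply the substitution $Y_t\mapsto X_{\sigma_{\bm i}(t)}$ to this identity. The left-hand side becomes ${\mathfrak K}_{\Lambda}({\bm X})$ by the previous paragraph. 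On the right, a single factor $\kappa_{{\bm \lambda}^{\star}_k}({\bm Y})$, a joint cumulant in which $Y_t$ occurs $({\bm \lambda}^{\star}_k)_t$ times, turns into a joint cumulant in which $X_{\ell}$ occurs $\sum_{t\in\sigma_{\bm i}^{-1}(\ell)}({\bm \lambda}^{\star}_k)_t=(\tilde{{\bm \lambda}}_k)_{\ell}$ times, i.e.\ into the multivariate cumulant $\kappa_{\tilde{{\bm \lambda}}_k}({\bm X})$ with $\tilde{{\bm \lambda}}_k$ the $k$-th column of $\Phi_{\sigma_{\bm i}}(\Lambda^{\star})$ as in Definition~\ref{5.4}. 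Taking the product over $k$ and regrouping equal columns, which affects only the bookkeeping of column multiplicities and not the value of the product, sends $\kappa_{\bm Y}(\Lambda^{\star})$ to $\kappa_{\bm X}\big(\Phi_{\sigma_{\bm i}}(\Lambda^{\star})\big)$, whence
\[
{\mathfrak K}_{\Lambda}({\bm X})=\sum_{\Lambda^{\star}\in{\mathcal C}_{\pi}}\kappa_{\bm X}\big(\Phi_{\sigma_{\bm i}}(\Lambda^{\star})\big).
\]

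Finally I would collect the terms of the last display according to the common value $\tilde{\Lambda}=\Phi_{\sigma_{\bm i}}(\Lambda^{\star})$. By construction $\tilde{\Lambda}$ ranges exactly over ${\mathfrak C}_{\Lambda}=\{\Phi_{\sigma_{\bm i}}(\Lambda_{\tilde{\pi}})\mid\Lambda_{\tilde{\pi}}\in{\mathcal C}_{\pi}\}$, and for a fixed $\tilde{\Lambda}$ the number of $\Lambda^{\star}\in{\mathcal C}_{\pi}$ with $\Phi_{\sigma_{\bm i}}(\Lambda^{\star})=\tilde{\Lambda}$ is precisely $|\Phi_{\sigma_{\bm i}}^{-1}(\tilde{\Lambda})\cap{\mathcal C}_{\pi}|=a_{\tilde{\Lambda}}$. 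Writing $\tilde{\Lambda}=(\tilde{{\bm \lambda}}_1^{c_1},\tilde{{\bm \lambda}}_2^{c_2},\ldots)$ and $\kappa_{\bm X}(\tilde{\Lambda})=[\kappa_{\tilde{{\bm \lambda}}_1}({\bm X})]^{c_1}[\kappa_{\tilde{{\bm \lambda}}_2}({\bm X})]^{c_2}\cdots$ then yields \eqref{genmulcum}.

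The hard part will be the first step: verifying carefully that $\varphi$, $\sigma_{\bm i}$ and $\Phi_{\sigma_{\bm i}}$ are mutually consistent, so that $\varphi$ really is a section of $\Phi_{\sigma_{\bm i}}$ and this combinatorial fact upgrades to the algebraic identity ${\mathfrak K}_{\varphi(\Lambda)}({\bm Y})\big|_{Y_t=X_{\sigma_{\bm i}(t)}}={\mathfrak K}_{\Lambda}({\bm X})$, on which every subsequent step rests. This is what the labeling-rule machinery of Section~4 is built for, and Lemma~\ref{6.1} then supplies the complementary counting fact $|\Phi_{\sigma_{\bm i}}^{-1}(\tilde{\Lambda})|=d_{\tilde{\Lambda}}$, so that $a_{\tilde{\Lambda}}\le d_{\tilde{\Lambda}}$, with equality $a_{\tilde{\Lambda}}=d_{\tilde{\Lambda}}=1$ in the classical case ${\bm i}=\vec{{\bm 1}}_n$ where \eqref{genmulcum} collapses to \eqref{gencumincumul}. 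The remaining ingredients — the expansion \eqref{gcum2}, proved in the Appendix, and the fact that substituting the dummy variables commutes with forming joint cumulants — are routine.
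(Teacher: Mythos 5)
Your proposal is correct and follows essentially the same route as the paper's own proof: introduce the dummy variables $Y_j=X_{\sigma_{\bm i}(j)}$, use ${\mathfrak K}_{\Lambda}({\bm X})={\mathfrak K}_{\varphi(\Lambda)}({\bm Y})$, expand via \eqref{gcum2} over complementary $\vec{{\bm 1}}_{|{\bm i}|}$-partitions, convert each $\kappa_{\bm Y}(\Lambda^{\star})$ into $\kappa_{\bm X}\big(\Phi_{\sigma_{\bm i}}(\Lambda^{\star})\big)$, and group terms with common image $\tilde{\Lambda}$ to obtain the coefficients $a_{\tilde{\Lambda}}=|\Phi_{\sigma_{\bm i}}^{-1}(\tilde{\Lambda})\cap{\mathcal C}_{\pi}|$. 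Your extra remarks (that $\varphi$ is a section of $\Phi_{\sigma_{\bm i}}$, and the bound $a_{\tilde{\Lambda}}\le d_{\tilde{\Lambda}}$ from Lemma~\ref{6.1}) are consistent side observations rather than deviations.
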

\begin{proof}
Set $p=|{\bm i}|$ and define ${\bm Y}=(Y_1, Y_2, \ldots, Y_{p})$ by
\begin{equation}
 Y_j = X_{\sigma_{\bm i}(j)},  \quad \hbox{\rm for} \,\, j=1,2,\ldots,{p}
\label{27}
\end{equation}
 where  $\sigma_{{\bm i}}$ is the labelling rule in Definition \ref{5.4}. Suppose $\Lambda_{\pi} = \varphi(\Lambda) = ({\bm s}_1, \ldots, {\bm s}_{l(\Lambda)}) \in {\mathcal M}_{p}.$   Then ${\mathfrak K}_{\Lambda}({\bm X}) =
{\mathfrak K}_{\varphi(\Lambda)}({\bm Y})$ and from \eqref{gcum2}
\begin{equation} 
 {\mathfrak K}_{{\bm s}^{\intercal}_1; \ldots; {\bm s}^{\intercal}_{l(\Lambda)}}({\bm Y}) = \sum_{{\Lambda}^{\star} \in {\mathcal C}_{\pi}} \kappa_{{\bm Y}}({\Lambda}^{\star}) = \sum_{{\Lambda}^{\star} \in {\mathcal C}_{\pi}} \kappa_{{\bm \lambda}^{\star}_1}({\bm Y})\cdots \kappa_{{\bm \lambda}^{\star}_{l({\Lambda}^{\star})}}({\bm Y})
\label{gencumY}
\end{equation}
where  ${\mathcal C}_{\pi}$ is the set of all complementary ${\bm 1}_{p}$-partitions of $\Lambda_{\pi} =\varphi(\Lambda),$ that is $ {\mathcal C}_{\pi} = \{ {\Lambda}^{\star}= ( {\bm \lambda}^{\star}_1, \ldots, {\bm \lambda}^{\star}_{l({\Lambda^{\star}})}) \vdash {\bm 1}_{p} | V_{\varphi(\Lambda)} \cap {V}^{\star}  = V_{{\bm 1}_p}  \}$ with  ${V}^{\star}$ and $V_{\varphi(\Lambda)} $  the column spans of $\Lambda^{\star}$ and 
$\Lambda_{\pi}=\varphi(\Lambda)$ respectively.  Now, consider for example
$\kappa_{{\bm \lambda}^{\star}_1}({\bm Y})$ in \eqref{gencumY}. Observe that
$$ \kappa_{{\bm \lambda}^{\star}_1}({\bm Y})=  {\mathcal K}(\underbrace{X_{\sigma_{\bm i}(1)}}_{{\bm \lambda}^{\star}_{11}}, \underbrace{X_{\sigma_{\bm i}(2)}}_{{\bm \lambda}^{\star}_{12}}, \ldots, \underbrace{X_{\sigma_{\bm i}(p)}}_{{\bm \lambda}^{\star}_{1p}}) = \kappa_{j_1, \ldots, j_n} ({\bm X})$$ 
where each index $j_t = \sum_{j \in \sigma^{-1}_{\bm i}(t)}  ({\bm \lambda}^{\star}_1)_j$ is the number of subscripts $j$ of the entries of ${\bm Y}$ that are mapped in $t$ under $\sigma_{\bm i}$ for all $t=1, \ldots, n.$ Since 
$|\sigma^{-1}_{\bm i}(t)|=i_t$ then $j_t \leq i_t$ for all $t=1, \ldots,n.$
Thus we have
\begin{equation}
 \kappa_{{\bm Y}}({\Lambda}^{\star}) = \kappa_{{\bm \lambda}^{\star}_1}({\bm Y})\cdots \kappa_{{\bm \lambda}^{\star}_{l({\Lambda}^{\star})}}({\bm Y}) = \underbrace{\kappa_{j_1, \ldots, j_n} ({\bm X}) \cdots  \kappa_{t_1, \ldots, t_n} ({\bm X})}_{l({\Lambda}^{\star})} =  \kappa_{{\bm X}}(\tilde{\Lambda}) 
\label{maineq}
\end{equation}
where \(\tilde{\Lambda} = \Phi_{\sigma_{\bm i}}(\Lambda^{\star})\) is the image of \(\Lambda^{\star}\) under the reverse transformation \(\Phi_{\bm i}\), with columns \((j_1, \ldots, j_n), \ldots, (t_1, \ldots, t_n)\) and \(l(\tilde{\Lambda}) = l(\Lambda^{\star})\). 
Note that  \eqref{maineq} holds for every $\vec{\bm 1}_{p}\)-partition \(\Lambda^{\star} \in \Phi_{\bm i}^{-1}(\tilde{\Lambda})$ that is complementary to
$\Lambda_{\pi}.$ Thus the number of such partitions is $a_{\tilde{\Lambda}} = \left| \Phi_{\sigma_{\bm i}}^{-1}({\tilde{\Lambda}}) \cap {\mathcal C}_{\pi} \right|.$
Therefore, the result follows by grouping the terms on the right-hand side of \eqref{gencumY} according to $\tilde{\Lambda} \in \mathfrak{C}_{\Lambda}$, and multiplying each $\kappa_{\bm X}(\tilde{\Lambda})$ by its corresponding coefficient 
$a_{\tilde{\Lambda}}.$
\end{proof}
\section{Applications} \label{Appl}
\subparagraph{Computational results.}
To demonstrate that the proposed algorithm based on two-block partitions is the most efficient, we implemented all the described methods for generating complementary set partitions within a single software environment. Since Stafford’s algorithm requires symbolic computation, we selected {\tt Maple 2024} \cite{maple2024} as the common platform for all implementations. The computational times\footnote{All routines are available upon request. Benchmarks were run on a laptop with an 11th Gen Intel(R) Core(TM) i7-1165G7 @ 2.80GHz.} are reported in Table 1. In particular, in Table 1, the first column lists the integer partition corresponding to the cardinalities of the blocks of $\pi$, which is the only required input due to the swapping property. The second column presents the results of the new method proposed in Section~\ref{newal}. The third column shows the performance of Stafford’s algorithm. The fourth and fifth columns report the results of the algorithms based on connected graphs, using the {\tt IsConnected} function and the Laplacian matrix, respectively. The final column corresponds to the algorithm that computes the nullspace of the intersection between column spans of $\vec{\bm 1}_n$-partitions. 
\begin{table}[H]
\centering
\begin{tabular}{ |c||c|c|c|c|c|   }
 \hline
 integer partition& {\tt csp} & {\tt csp\_AS} & {\tt csp\_GR} & {\tt csp\_LA} & {\tt csp\_NS} \\
\hline\hline
 $(1,1,2,2) \vdash 6$     &  0.0       & 0.0       &  0.016   &  0.094     & 0.187 \\
 $(2,2,2) \vdash 6$        &  0.0       & 0.016   &  0.016   &  0.172     & 0.296 \\
 $(2,2,3) \vdash 7$        & 0.016    & 0.031    &  0.282   &  1.234     & 1.516 \\
 $(3,4) \vdash 7$           & 0.0       & 0.031    &  0.234   &  1.313     & 1.437 \\
$(1,1,2,2,2) \vdash 8$   &  0.062    & 0.313   &  0.750   &  3.187     & 3.828 \\
$(1,3,4) \vdash 8$         &  0.016   & 0.328   &  0.906    &  5.516     & 6.078 \\
$(1,2,2,4) \vdash 9$       &  0.313   & 1.375  &  5.062    &  31.688   & 33.891 \\
$(2,3,4) \vdash 9$         &  0.110    & 1.172  &  6.234    &  40.125   & 43.719 \\
 $(2,2,2,2,2) \vdash 10$ & 1.218    & 9.110   &  39.140  &  241.782  & 659.093 \\
 $(2,2,3,3) \vdash 10$    & 0.718    & 8.609   &  41.907  &  278.593  & 876.39 \\
 \hline
\end{tabular}\caption{Computation times of the algorithms (implemented in {\tt Maple 2024}) that generate complementary set partitions for a given partition $\pi$.}
\label{table:1}
\end{table}
According to the results in Table 1, the algorithms based on nullspaces and Laplacian matrices are quite similar in performance for small values of $n.$ The algorithm using the {\tt Maple} {\tt IsConnected} function is faster than the previous two but slower than Stafford's algorithm. On the other hand, the algorithm based on two-block partitions not only demonstrates significantly lower computational times compared to all the other algorithms but also can be easily implemented on non-symbolic platforms. In fact, it only requires a routine to generate multi-index partitions. In {\tt R}, this routine is already available in the {\tt kStatistics} package\footnote{The routine is currently available upon request from the authors but will be included in the {\tt kStatistics} package shortly.} \cite{kStatistics}. The implementation strategy for {\tt R} is described below.
\subparagraph{The csp algorithm in {\tt R}.} In {\tt R}, ${\vec{\bm 1}}_n$ partitions offer an efficient way to handle set partitions. As a result, a version of the {\tt csp algorithm} outlined in Section \ref{newal} has been developed, using complementary ${\vec{\bm 1}}_n$ partitions.
Suppose we need to list the elements of ${\mathcal C}_{\pi}$ in \eqref{gcum2}, the set of complementary ${\vec{\bm 1}}_n$-partitions of $\Lambda_{\pi} = ({\bm \lambda}_1, \ldots, {\bm \lambda}_m)$. The steps of the newly proposed method can be summarized as follows:
\begin{description}
\item[{\it a)}]  suppose  $t_1, \ldots, t_j| t_{j+1},  \ldots, t_m$ a partition  in two blocks of the subscripts of  $({\bm \lambda}_1, \ldots, {\bm \lambda}_m),$  and  set ${\bm v}_{1}={\bm \lambda}_{t_1} + \cdots + {\bm \lambda}_{t_j}$ and ${\bm v}_{2}= {\bm \lambda}_{t_{j+1}} + \cdots + {\bm \lambda}_{t_m};$
\item[{\it b)}] for all the $\vec{{\bm 1}}_n$-partitions $\Lambda^{(1)} \vdash {\bm v}_{1}$ and all the $\vec{{\bm 1}}_n$-partitions $\Lambda^{(2)} \vdash {\bm v}_{2},$  construct the block matrix $\Lambda^{\star} = (\Lambda^{(1)},\Lambda^{(2)});$ 
\item[{\it c)}] repeat steps {\it a)} and {\it b)} for all partitions in two blocks of the subscripts of  $({\bm \lambda}_1, \ldots, {\bm \lambda}_m),$ and 
denote with ${\mathcal T}^{(n)}_{\pi,m}$ the set of $\vec{{\bm 1}}_n$-partitions $\Lambda^{\star}=(\Lambda^{(1)},\Lambda^{(2)})$ constructed as above; 
\item[{\it d)}]  set ${\mathcal C}_{\pi} = {\mathcal M}_n -  {\mathcal T}^{(n)}_{\pi,m},$ where ${\mathcal M}_{n}$ denotes the set of all $\vec{{\bm 1}}_n$-partitions. 
\end{description}
\subparagraph{Generalized multivariate cumulants using  the {\tt csp} algorithm.}
From \eqref{gencumincumul} and Theorem \ref{thm3.2} generalized cumulants can be recovered as
\begin{equation}
\mathfrak{K}(\pi) = \sum_{\tilde{\pi} \in {\Pi}_n} \prod_{B \in \tilde{\pi}}  \kappa(B) 
- \sum_{\tilde{\pi} \in {\mathcal T}^{(n)}_{\pi,m}} \prod_{B \in \tilde{\pi}}  \kappa(B) 
\label{gencumincumul1}
\end{equation}
where ${\mathcal T}^{(n)}_{\pi,m}$ is given in \eqref{fund3}. A similar strategy is applied to recover generalized multivariate cumulants \eqref{genmulcum} as explained below. From \eqref{gencumY}, the computation of generalized multivariate cumulants
$\mathfrak{K}_{\Lambda}({\bm X})$ can be performed using \eqref{gencumincumul1}, that is
\begin{equation}
{\mathfrak K}_{\varphi(\Lambda)}({\bm Y}) = \sum_{{\Lambda}^{\star} \in {\mathcal M}_{|{\bm i}|}} \kappa_{{\bm Y}} (\Lambda^{\star})  - \sum_{{\Lambda}^{\star} \in {\mathcal T}^{(|{\bm i}|)}_{\pi,m}} \kappa_{{\bm Y}} (\Lambda^{\star}) 
\label{gencumY21}
\end{equation}
where ${\bm Y}$ is given in \eqref{27}, ${\mathcal T}^{(|{\bm i}|)}_{\pi,m}$ is the set of all not complementary 
$\vec{{\bm 1}}_{|{\bm i}|}$-partitions of $\Lambda_{\pi} = \varphi(\Lambda)$ and $m = l(\Lambda_{\pi}).$ By repeating the same arguments of the proof of Theorem
\ref{thm4.2} and using Lemma \ref{6.1}, the first sum in \eqref{gencumY21} is 
\begin{equation}
\sum_{{\Lambda}^{\star} \in {\mathcal M}_{|{\bm i}|}} \kappa_{{\bm Y}} (\Lambda^{\star}) =\sum_{\widetilde{\Lambda} \vdash {\bm i}}  d_{\tilde{\Lambda}} \kappa_{{\bm X}} (\tilde{\Lambda}).
\label{firstadd}
\end{equation}
Notice that the computation of the rhs  of \eqref{firstadd} is generally faster than the one at the lhs. For the latter sum in \eqref{gencumY21}, with similar arguments we have
\begin{equation}
\sum_{{\Lambda}^{\star} \in {\mathcal T}^{(|{\bm i}|)}_{\pi,m}} \kappa_{{\bm Y}} (\Lambda^{\star})  =   \sum_{\widetilde{\Lambda}  \in \Phi_{{\bm i}}[{\mathcal T}^{(|{\bm i}|)}_{\pi,m}]}   b_{\tilde{\Lambda}} \kappa_{{\bm X}} (\tilde{\Lambda})
\label{genmulcum1}
\end{equation}
where $b_{\tilde{\Lambda}} = |\Phi_{{\bm i}}^{-1}({\tilde{\Lambda}})   \cap {\mathcal NC}_{\pi}|$ and ${\mathcal NC}_{\pi}$ denotes the set of all not complementary ${\bm 1}_{|{\bm i}|}$-partitions of $\Lambda_{\pi} =\varphi(\Lambda).$ Suppose $\Lambda_{\pi} = (\bm{\lambda}_1, \ldots, {\bm \lambda}_m).$  To compute the rhs of \eqref{genmulcum1} efficiently, we use \eqref{fund3ter}, which gives
$$\sum_{\widetilde{\Lambda}  \in \Phi_{{\bm i}}[{\mathcal T}^{(|{\bm i}|)}_{\pi,m}]}   b_{\tilde{\Lambda}} \kappa_{{\bm X}} (\tilde{\Lambda}) = \sum_{k=1}^l (-1)^{k+1} \sum_{1 \leq i_1 \leq \cdots \leq i_k \leq |{\bm i}|} \sum_{\Lambda^{\star} \in \cap_{j=1}^k {\mathcal T}_{i_j}} \kappa_{{\bm X}}(\Phi_{\bm i}(\Lambda^{\star})),$$
where ${\mathcal T}_{i_j}$ is the set analogous to the one given in \eqref{fund3bis} but for $\vec{\bm 1}_{|{\bm i}|}$-partitions, that is the set of all  $\Lambda^{\star} \in {\mathcal M}_{|i|} $
such that $\Lambda^{\star} = (\Lambda^{(1)}, \Lambda^{(2)})$ with $\Lambda^{(j)} \vdash {\bm v}_j = \sum_{i \in B_j}  {\bm \lambda}_i$ for $j=1,2$ and for a fixed $C_1|C_2 \in \Pi_{m,2}.$

\subparagraph{Estimation of generalized multivariate cumulants.}
An unbiased estimator $\hat{\mathfrak K}_{\Lambda}(\bm X)$ of the generalized multivariate cumulant ${\mathfrak K}_{\Lambda}(\bm X)$ is obtained by replacing 
the products of multivariate cumulants appearing on the rhs of \eqref{genmulcum}  with their corresponding unbiased estimators. Specifically for each $\tilde{\Lambda}=({\bm \lambda}_1^{c_1},  {\bm \lambda}_2^{c_2}, \ldots) \in {\mathfrak C}_{\Lambda}$   the products appearing on  the rhs of \eqref{genmulcum} are replaced by $k_{\tilde{\Lambda}}({\bm X})$ defined as 
\begin{equation}
{\mathbb E}[k_{\tilde{\Lambda}}({\bm X})] = {\mathbb E}\bigg[k_{\underbrace{\tilde{{\bm \lambda}}_1; \ldots, \tilde{{\bm \lambda}}_1}_{c_1}; \underbrace{\tilde{{\bm \lambda}}_2; \ldots, \tilde{{\bm \lambda}}_2}_{c_2}; \ldots} ({\bm X}) \bigg] = [\kappa_{\tilde{{\bm \lambda}}_1}({\bm X})]^{c_1}  [\kappa_{\tilde{{\bm \lambda}}_2}({\bm X})]^{c_2}\cdots.
\label{31}
\end{equation}
These statistics   are the so-called multivariate polykays \cite{di2008unifying}  typically expressed in terms of power sum symmetric functions given by $S_{t_1, \ldots, t_n} = S_{t_1, \ldots, t_n}({\bm X})=\sum_{l=1}^N X_{1,l}^{t_1} X_{2,l}^{t_2} \cdots X_{n,l}^{t_n}$ where the sum runs over $N$ istances of ${\bm X}=(X_1, X_2, \ldots, X_n).$
\begin{example} \label{exp11} Suppose $n=3.$ The unbiased estimator of 
$\kappa_{\scriptscriptstyle{110}}({\bm X}) \kappa_{\scriptscriptstyle{001}}({\bm X})$ is the polykay
{\small $${k_{\scriptscriptstyle{110;001}}({\bm X}) \!\! = \!\! \frac{-S_{0,0,1} S_{1,0,0} S_{0,1,0}+\left(N -1\right) S_{0,0,1} S_{1,1,0}+S_{0,1,0} S_{1,0,1}+S_{0,1,1} S_{1,0,0}-N S_{1,1,1}}{N \left(N -1\right) \left(N -2\right)}}.
$$} 
\end{example} 
Implementing this strategy to recover estimators of ${\mathfrak K}_{\bm X}(\Lambda)$ is computationally intensive. Instead, a more efficient procedure\footnote{Routines for computing unbiased estimators of generalized multivariate cumulants are also available upon request.} for obtaining  $\hat{\mathfrak K}_{\Lambda}(\bm X)$ consists in applying  the canonical transformation $\varphi$ of
$\Lambda.$ Indeed from \eqref{gencumY}, we have 
\begin{equation}
\hat{\mathfrak K}_{\Lambda}(\bm X) = \hat{\mathfrak K}_{\varphi(\Lambda)}({\bm Y})
\label{rid}
\end{equation}
 with ${\bm Y}$ given in \eqref{27} and $\hat{\mathfrak K}_{\varphi(\Lambda)}({\bm Y})$ unbiased estimator of the generalized cumulant ${\mathfrak K}_{\varphi(\Lambda)}({\bm Y}).$ In turn, the expression of $\hat{\mathfrak K}_{\varphi(\Lambda)}({\bm Y})$ involves power sum symmetric functions of the form
\begin{equation}
S_{t_1, \ldots, t_p}({\bm Y})=\sum_{l=1}^N Y_{1,l}^{t_1} Y_{2,l}^{t_2} \cdots Y_{p,l}^{t_n}
\label{psY}
\end{equation}
with $(t_1, \ldots, t_p)$ binary vectors. The expression of $\hat{\mathfrak K}_{\Lambda}(\bm X)$ in terms of power sums in ${\bm X}$ is then recovered by \eqref{27}, that is plugging $X_{\sigma_{\bm i}(j)}$ in place of
$Y_j$ in \eqref{psY}. Moreover the binary vector  $(t_1, \ldots, t_p)$ is transformed into 
the multi-index $(j_1, \ldots, j_n)$ where $j_s = |\sigma^{-1}_{\bm i}(s)|, s=1, \ldots, n.$ 
\begin{example} \label{5.8}
In Example \ref{gen1} we have ${\rm cov}(X_1, X_2^2) = \kappa_{1,2} + 2 \kappa_{1,1} \kappa_{0,1}.$ Its unbiased estimator is
$$\hat{\mathfrak K}^{1| 1}_{10; 02}({\bm X}) = k_{12}({\bm X}) + 2 k_{11;01} ({\bm X}) = \frac{N S_{12}({\bm X}) - S_{10}({\bm X}) S_{02}({\bm X})}{N(N-1)}$$ which can be recovered from
\begin{equation}
\hat{\mathfrak K}_{100;011}({\bm Y}) =  \frac{N S_{111}({\bm Y})  - S_{100}({\bm Y})  S_{011}({\bm Y}) }{N(N-1)}
\label{polY}
\end{equation}
with $S_{111}({\bm Y}) = \sum_{i} Y_{1,i} Y_{2,i} Y_{3,i}$ replaced by $S_{12} ({\bm X})= \sum_{i} X_{1,i} X_{2,i}^2$
and $S_{100}({\bm Y}) = \sum_{i} Y_{1,i}, $ $S_{011}({\bm Y}) = \sum_{i} Y_{2,i} Y_{3,i}$ replaced by $S_{10}({\bm X}) = \sum_{i} X_{1,i}, S_{02}({\bm X}) =  \sum_{i} X_{2,i}^2$ respectively.  
\end{example}
From the previous discussion, the expression of $\hat{\mathfrak K}_{\Lambda}(\bm X)$ is recovered from an unbiased estimator of the generalized cumulant ${\mathfrak K}_{\varphi(\Lambda)}({\bm Y}).$  In turn, an unbiased estimator $\hat{\mathfrak K}_{\Lambda_{\pi}}({\bm Y})$  of the generalized cumulant ${\mathfrak K}_{\Lambda_{\pi}}({\bm Y})$ can be obtained by replacing the product of joint cumulants on the rhs of \eqref{gencumY} with the joint polykays $k_{{\bm \lambda}_1;\ldots; {\bm \lambda}_m}({\bm Y}),$ such that ${\mathbb E}[k_{{\bm \lambda}_1;\ldots; {\bm \lambda}_m}({\bm Y})] =\kappa_{{\bm \lambda}_1}({\bm Y}) \cdots \kappa_{{\bm \lambda}_m}({\bm Y})$ which is the product of joint cumulants of ${\bm Y},$ see Example \ref{exp11}. As before, this strategy is computationally intensive. A more efficient procedure consists in using a change of variables. Indeed $
\hat{\mathfrak K}_{{\bm \lambda}_1;\ldots; {\bm \lambda}_m}({\bm Y})  = \hat{\mathfrak K}_{{\bm 1}_m} (Z_1, \ldots, Z_m) $
with $Z_j={\bm Y}^{{\bm s}_j}$ for $j=1,\ldots,m.$ The expression of 
$\hat{\mathfrak K}_{{\bm 1}_m}(Z_1, \ldots, Z_m) $ involves  power sum symmetric functions of the form
\begin{equation}
S_{t_1, \ldots, t_m}({\bm Z})=\sum_{l=1}^N Z_{1,l}^{t_1} Z_{2,l}^{t_2} \cdots Z_{m,l}^{t_n}
\label{psY1}
\end{equation}
with $(t_1, \ldots, t_m)$ binary vectors. The expression of $\hat{\mathfrak K}_{{\bm \lambda}_1;\ldots; {\bm \lambda}_m}({\bm Y}) $ in terms of the power sums in ${\bm Y}$ is then recovered plugging ${\bm Y}^{{\bm s}_j}$ in place of $Z_j$ in \eqref{psY1} and observing that $(t_1, \ldots, t_m)$ is transformed into $(j_1, \ldots, j_p) = \sum_{j=1}^m t_j {\bm s}^{\intercal}_j.$ 
\begin{example} \label{exp3}
In Example \ref{5.8}, we have $\hat{\mathfrak K}_{\scriptscriptstyle{100 ; 011}}  ({\bm Y}) = \hat{\mathfrak K}_{1,1}(Z_1, Z_2)$ with $Z_1 = {\bm Y}^{(1,0,0)}=Y_1$ and $Z_2 = {\bm Y}^{(0,1,1)}=Y_2 Y_3.$ Since
\begin{equation}
\hat{\mathfrak K}_{1,1}(Z_1, Z_2) = \frac{N S_{1,1}({\bm Z}) - S_{1,0}({\bm Z}) S_{0,1}({\bm Z})}{N(N-1)} 
\label{poly22}
\end{equation}
with $S_{1,1}({\bm Z}) = \sum_{l=1}^N Z_{1,l} Z_{2,l}, S_{1,0}({\bm Z}) = \sum_{l=1}^N Z_{1,l} $ and $S_{0,1}({\bm Z}) = \sum_{l=1}^N  Z_{2,l},$ plugging $Y_{1,i}$ in place of $Z_{1,i}$ and  $Y_{2,i} Y_{3,i}$ in place of $Z_{2,i},$ we recover 
$S_{1,1}({\bm Z}) =S_{1,1,1}({\bm Y}),  S_{1,0}({\bm Z})  = S_{1,0,0}({\bm Y}),  S_{0,1}({\bm Z})  = S_{0,1,1}({\bm Y}).$  Replacing all these power sums in \eqref{poly22}, \eqref{polY} is obtained.
\end{example}
\section{Conclusions and open problems}
This paper introduces a new combinatorial method for enumerating all complementary set partitions of a given partition, demonstrating that only two-block partitions are sufficient to identify all not complementary partitions. This approach outperforms existing procedures in terms of computational efficiency and can be implemented in non-symbolic programming environments such as {\tt R}. In particular, classical methods that utilize connected graphs to generate complementary set partitions are discussed. We also consider the Stafford's algorithm, which automates the calculation of generalized cumulants and includes procedures for complementary set partitions. However, this method is limited by its reliance on symbolic algebra tools and the employment of a not open source software as {\tt Mathematica}. Finally, we have analyzed one more method
using the columns of $\vec{{\bm 1}}_n$-partitions (paralleling set partitions) as spanning bases of suitable subspaces. The complementary $\vec{{\bm 1}}_n$-partitions correspond to subspaces whose intersections are spanned by the unit vector. Comparisons of computational times were made using {\tt Maple 2024}.  

An extension of the notion of generalized cumulants to the case of repeated r.v.'s is proposed, based on multiset subdivisions. These indexes are intermediate between multivariate moments and multivariate cumulants.  A closed-form formula is derived to express these generalized multivariate cumulants in terms of  multivariate cumulants, together with a combinatorial interpretation of the coefficients involved. The key tool is the definitions of suitable functions transforming multi-index partitions, corresponding to multi-set subdivisions, in set partitions, corresponding to ${\vec{\bm 1}}_n$-partition and viceversa. The use of these functions and suitable dummy variables allow to develop very efficient procedures for estimating generalized multivariate cumulants through multivariate polykays.

Exploring the broader implications of all these methods in multivariate analysis, such as their potential for new estimation procedures, hypothesis testing, and models involving complex dependence structures, remains an open avenue for future investigation. 
For example, developing efficient algorithms for the computation of cumulants of polykays and their multivariate generalizations remains an ongoing challenge. At present the only routine available is in {\tt Mathstatica} \cite{mathstatica} for the calculation of the cumulants of $k$-statistics\footnote{Univariate polykays reduce to $k$-statistics when the product of joint cumulants is replaced by a single joint cumulant.}. Future research aims might include also the characterization of these families of generalized cumulants in the setting of random matrices \cite{McCulDinardo}.

%
\section{Appendix: generalized cumulants and ${\vec{\bm 1}}_n$-partitions}
Denote with ${\mathcal M}_n$ the set of all $\vec{{\bm 1}}_n$-partitions $\Lambda_{\pi}.$ 
\begin{prop} \label{17bis} $\mu_{\bm X}\!(\Lambda_{\pi}) = \sum_{\{ \Lambda \in {\mathcal M}_n | V_{\pi} \subseteq V \}}  \kappa_{\bm X}(\Lambda)$ with $V_{\pi}, V$ column spans of $\Lambda_{\pi}, \Lambda$ respectively. 
\end{prop}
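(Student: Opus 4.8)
The plan is to reduce the statement to the classical block-wise moment--cumulant expansion and then translate the resulting refinement condition into the language of column spans via Theorem \ref{mainth}.

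First I would record that, since $\Lambda_{\pi}=({\bm \lambda}_1,\ldots,{\bm \lambda}_m)$ has pairwise distinct columns (so all multiplicities $r_i$ equal $1$), the definition \eqref{multfunct} gives $\mu_{\bm X}(\Lambda_{\pi})=\prod_{i=1}^m \mu_{{\bm \lambda}_i}({\bm X})=\prod_{i=1}^m {\mathbb E}\big[\prod_{t\in B_i}X_t\big]$, the product of the joint moments attached to the blocks $B_1,\ldots,B_m$ of $\pi$. Next, for a single block $B_i$, the specialization of \eqref{cummom1} expressing moments in terms of cumulants, applied to the subvector $(X_t)_{t\in B_i}$, reads ${\mathbb E}\big[\prod_{t\in B_i}X_t\big]=\sum_{\rho_i\in\Pi_{B_i}}\prod_{D\in\rho_i}\kappa(D)$, the sum running over all set partitions $\rho_i$ of the index set $B_i$ and $\kappa(D)$ the joint cumulant of the r.v.'s indexed by $D$.

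Then I would multiply these $m$ identities and expand the product of sums. A summand of $\mu_{\bm X}(\Lambda_{\pi})$ is indexed by an $m$-tuple $(\rho_1,\ldots,\rho_m)$ with $\rho_i\in\Pi_{B_i}$ and equals $\prod_{i=1}^m\prod_{D\in\rho_i}\kappa(D)$. The key combinatorial step is that the map $(\rho_1,\ldots,\rho_m)\mapsto\tilde{\pi}:=\rho_1\cup\cdots\cup\rho_m$ is a bijection onto $\{\tilde{\pi}\in\Pi_n:\tilde{\pi}\leq\pi\}$: partitioning $[n]$ so that every block lies inside some $B_i$ is the same as choosing, independently, a set partition of each $B_i$. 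Under this bijection $\prod_{i}\prod_{D\in\rho_i}\kappa(D)=\prod_{C\in\tilde{\pi}}\kappa(C)=\kappa_{\bm X}(\Lambda_{\tilde{\pi}})$ by \eqref{multfunct}, hence $\mu_{\bm X}(\Lambda_{\pi})=\sum_{\tilde{\pi}\leq\pi}\kappa_{\bm X}(\Lambda_{\tilde{\pi}})$. Finally, by the equivalence established inside the proof of Theorem \ref{mainth}, $\tilde{\pi}\leq\pi$ holds if and only if $V_{\pi}\subseteq V_{\tilde{\pi}}$; since $\tilde{\pi}\mapsto\Lambda_{\tilde{\pi}}$ is a bijection $\Pi_n\to{\mathcal M}_n$, rewriting the index set as $\{\Lambda\in{\mathcal M}_n:V_{\pi}\subseteq V\}$ yields the claimed formula.

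The only point requiring genuine care is the bijection step — verifying that $\rho_1\cup\cdots\cup\rho_m$ is a well-defined set partition of $[n]$, that it ranges exactly over the partitions refining $\pi$, and that the product of cumulant factors is preserved — together with keeping the direction of the inclusion straight (a coarser partition has the smaller column span, so $V_{\pi}\subseteq V_{\tilde{\pi}}$ corresponds to $\tilde{\pi}$ refining $\pi$, not the other way around). Everything else is a direct assembly of \eqref{cummom1} and Theorem \ref{mainth}.
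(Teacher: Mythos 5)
Your proof is correct and follows essentially the same route as the paper: factor $\mu_{\bm X}(\Lambda_{\pi})$ into block moments, expand each block moment into cumulants over partitions of that block, and identify the reassembled terms with the $\vec{{\bm 1}}_n$-partitions whose column span contains $V_{\pi}$ (your union $\rho_1\cup\cdots\cup\rho_m$ is exactly the paper's concatenation $(\Lambda_1,\Lambda_2,\ldots)$). If anything, you are slightly more explicit than the paper in verifying that the index set is hit bijectively and that $V_{\pi}\subseteq V_{\tilde{\pi}}$ characterizes $\tilde{\pi}\leq\pi$ in both directions, so no gap remains.
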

\begin{proof}
Suppose $\Lambda_{\pi} = (\boldsymbol{\lambda}_1, \boldsymbol{\lambda}_2, \ldots)$. From the second identity in \eqref{multfunct}, it follows that
$$
\mu_{\bm X}\!(\Lambda_{\pi}) = \mu_{\boldsymbol{\lambda}_1}({\bm X}) \mu_{\boldsymbol{\lambda}_2}({\bm X}) \cdots = \sum_{\Lambda_1 \vdash \boldsymbol{\lambda}_1,\, \Lambda_2 \vdash \boldsymbol{\lambda}_2,\, \ldots} \kappa_{\bm X}(\Lambda_1) \kappa_{\bm X}(\Lambda_2) \cdots.
$$
Define $\Lambda = (\Lambda_1, \Lambda_2, \ldots)$. Observe that $\Lambda$ is a $\vec{\boldsymbol{1}}_n$-partition. In particular, each column $\boldsymbol{\lambda}_i$ of $\Lambda_{\pi}$ can be written as a non-zero linear combination of the columns of $\Lambda$ with coefficients in $\{0,1\}$ and therefore the column space $V_{\pi}$ of $\Lambda_{\pi}$ is contained in the column space $V$ of $\Lambda,$ i.e., $V_{\pi} \subseteq V.$ The result follows from the multiplicative property $\kappa(\Lambda) = \kappa(\Lambda_1) \kappa(\Lambda_2) \cdots.$
\end{proof}
The expression of the generalized cumulant in terms of joint moments is given as follows.
\begin{prop} \label{17ter}
If $\Lambda_{\pi}=({\bm \lambda}_1, \ldots, {\bm \lambda}_m) \vdash 
\vec{{\bm 1}}_n$ then 
\begin{equation}
{\mathfrak K}_{{\bm \lambda}^{\intercal}_1; \ldots; {\bm \lambda}^{\intercal}_m}({\bm X}) = \sum_{\{ \tilde{\Lambda} \in {\mathcal M}_n | \tilde{V} \subseteq V_{\pi} \}}  (-1)^{l(\tilde{\Lambda}) - 1} (l(\tilde{\Lambda})-1)!\mu_{\bm X}\!(\tilde{\Lambda})
\label{eqcc}
\end{equation}
where  $\tilde{V}, V_{\pi}$ are the column spans of $\tilde{\Lambda}, \Lambda_{\pi}$ respectively and $\mu_{\bm X}\!(\tilde{\Lambda})$ is the product of joint moments of ${\bm X}$ as defined in \eqref{multfunct}.
\end{prop}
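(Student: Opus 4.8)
The plan is to reduce the generalized cumulant to an ordinary joint cumulant of a derived random vector, apply the multivariate cumulant-to-moment formula \eqref{cummom1} in that smaller setting, and then pull the resulting $\vec{{\bm 1}}_m$-partitions back to $\vec{{\bm 1}}_n$-partitions by left multiplication with $\Lambda_{\pi}$.

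First I would set ${\bm Z} = (Z_1, \ldots, Z_m)$ with $Z_j = {\bm X}^{{\bm \lambda}_j}$, so that by \eqref{gencum1} the left-hand side of \eqref{eqcc} is the joint cumulant ${\mathcal K}(Z_1, \ldots, Z_m) = \kappa_{1\ldots 1}({\bm Z})$. Applying the second identity of \eqref{cummom1} with $n$ replaced by $m$ and ${\bm X}$ by ${\bm Z}$ gives
$$
{\mathfrak K}_{{\bm \lambda}_1^{\intercal}; \ldots; {\bm \lambda}_m^{\intercal}}({\bm X}) = \sum_{\Gamma \vdash \vec{{\bm 1}}_m} (-1)^{l(\Gamma)-1}(l(\Gamma)-1)!\, \mu_{{\bm Z}}(\Gamma),
$$
the sum running over all $\vec{{\bm 1}}_m$-partitions $\Gamma = ({\bm \gamma}_1, \ldots, {\bm \gamma}_{l(\Gamma)})$ (all columns distinct).

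Next I would rewrite each summand in terms of ${\bm X}$. For a column ${\bm \gamma}$ of $\Gamma$ one has ${\bm Z}^{{\bm \gamma}} = \prod_{j:\, \gamma_j = 1} {\bm X}^{{\bm \lambda}_j} = {\bm X}^{\Lambda_{\pi}{\bm \gamma}}$, hence $\mu_{{\bm \gamma}}({\bm Z}) = \mu_{\Lambda_{\pi}{\bm \gamma}}({\bm X})$; multiplying over the columns of $\Gamma$ and invoking the definition of $\mu_{\bm X}(\cdot)$ in \eqref{multfunct} yields $\mu_{{\bm Z}}(\Gamma) = \mu_{{\bm X}}(\tilde{\Lambda})$ with $\tilde{\Lambda} := \Lambda_{\pi}\Gamma$. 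Since the columns of $\Lambda_{\pi}$ have pairwise disjoint supports they are linearly independent, so $\Gamma \mapsto \Lambda_{\pi}\Gamma$ is injective with $l(\tilde{\Lambda}) = l(\Gamma)$, and $\tilde{\Lambda}$ is again a $\vec{{\bm 1}}_n$-partition because its columns are the $\{0,1\}$-sums of the ${\bm \lambda}_j$ prescribed by the blocks of $\Gamma$, which partition $\vec{{\bm 1}}_n$.

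Finally I would identify the image of this map. Since each column of $\tilde{\Lambda} = \Lambda_{\pi}\Gamma$ is a $\{0,1\}$-combination of columns of $\Lambda_{\pi}$, we get $\tilde{V} = {\rm span}(\tilde{\Lambda}) \subseteq V_{\pi}$. Conversely, if $\tilde{\Lambda} \in {\mathcal M}_n$ with $\tilde{V} \subseteq V_{\pi}$, then each binary column of $\tilde{\Lambda}$ lies in ${\rm span}({\bm \lambda}_1, \ldots, {\bm \lambda}_m)$; disjointness of the supports of the ${\bm \lambda}_j$ forces the coefficients to be $0$ or $1$, so that column equals $\Lambda_{\pi}{\bm \gamma}$ for a unique binary ${\bm \gamma}$, and gathering these produces a $\vec{{\bm 1}}_m$-partition $\Gamma$ with $\Lambda_{\pi}\Gamma = \tilde{\Lambda}$ (equivalently, as noted in the proof of Theorem \ref{mainth}, $\{\tilde{\Lambda} : \tilde{V} \subseteq V_{\pi}\}$ is the set of $\vec{{\bm 1}}_n$-partitions of the coarsenings $\tilde{\pi} \geq \pi$, which are in bijection with $\Pi_m$). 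Thus $\Gamma \mapsto \Lambda_{\pi}\Gamma$ is a length-preserving bijection onto $\{\tilde{\Lambda} \in {\mathcal M}_n : \tilde{V} \subseteq V_{\pi}\}$, and re-indexing the displayed sum by $\tilde{\Lambda}$ gives \eqref{eqcc}. I expect the main obstacle to be exactly this last step: checking that the disjoint-support structure pins the coefficients down to $\{0,1\}$ and that the correspondence is compatible with the canonical orderings, so that the reindexing is a genuine bijection; the remaining manipulations are direct substitutions.
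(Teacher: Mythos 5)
Your proposal is correct and follows essentially the same route as the paper: introduce the dummy vector with components ${\bm X}^{{\bm \lambda}_j}$, apply the second identity of \eqref{cummom1} over $\vec{{\bm 1}}_m$-partitions, and rewrite each moment product as $\mu_{\bm X}(\tilde{\Lambda})$ with $\tilde{\Lambda}=\Lambda_{\pi}\Gamma$. In fact you are slightly more explicit than the paper on the final reindexing, since you verify that $\Gamma \mapsto \Lambda_{\pi}\Gamma$ is a length-preserving bijection onto $\{\tilde{\Lambda}\in{\mathcal M}_n \mid \tilde{V}\subseteq V_{\pi}\}$, a point the paper's proof only checks in the forward direction.
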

\begin{proof}
In \eqref{gencum1} set  $Y_j= {\bm X}^{{\bm \lambda}_j}$ for $j=1,2,\ldots, m.$ From the second identity in \eqref{cummom1} we have
\begin{equation}
{\mathfrak K}_{{\bm \lambda}^{\intercal}_1; \ldots; {\bm \lambda}^{\intercal}_m}({\bm X}) = \kappa_{\scriptscriptstyle{\vec{{\bm 1}}_m}} (Y_1,\ldots,Y_m) = \sum_{\Lambda \vdash \vec{{\bm 1}}_m} (-1)^{l(\Lambda)-1}(l(\Lambda)-1)! \mu_{{\bm Y}}\!(\Lambda)
\label{gencum2}
\end{equation}
where the sum runs over all $\vec{{\bm 1}}_m$-partitions $\Lambda$ of order $m \leq n.$ In each term $\mu_{{\bm Y}}\!(\Lambda)$ plug ${\bm X}^{{\bm \lambda}_j}$ in place of  $Y_j$ for $j=1,2,\ldots,m.$  Assume without loss of generality that
$\Lambda = ({\bm \lambda}^{\star}_1, {\bm \lambda}^{\star}_2,\ldots,{\bm \lambda}^{\star}_l)$ with $l \leq m$. Then we can write
$
\mu_{{\bm Y}}\!(\Lambda) = \mu_{{\bm \lambda}^{\star}_1} (Y_1,\ldots,Y_m) \, \mu_{{\bm \lambda}^{\star}_2} (Y_1,\ldots,Y_m) \cdots$ $ \mu_{{\bm \lambda}^{\star}_l} (Y_1,\ldots,Y_m)
$
where each term takes the form $
\mu_{{\bm \lambda}^{\star}_j} (Y_1,\ldots,Y_m) = {\mathbb E}[Y_1^{\lambda^{\star}_{1j}} \cdots Y_m^{\lambda^{\star}_{mj}}] =  {\mathbb E}[{\bm X}^{ {\bm \lambda}_1 \lambda^{\star}_{1j} + \cdots +{\bm \lambda}_m \lambda^{\star}_{mj}}].
$
Define
$
\tilde{\bm \lambda}_j = {\bm \lambda}_1  \lambda^{\star}_{1j} + \cdots +{\bm \lambda}_m  \lambda^{\star}_{mj}$ for $j=1,2,\ldots,l$ and notice that $ \tilde{\Lambda}=(\tilde{\bm \lambda}_1, \ldots, \tilde{\bm \lambda}_l) \in {\mathcal M}_n.$
Then $\mu_{\bm Y}\!(\Lambda)=\mu_{\bm X}\!(\tilde{\Lambda})$. The result follows by observing that, as $\Lambda$ varies in the set of all  $\vec{{\bm 1}}_m$-partitions, each $\tilde{\Lambda}$ is such that $ \tilde{V} \subseteq V_{\pi}$  since every column of $\tilde{\Lambda}$ is a non-zero linear combination of the columns of $\Lambda_{\pi}$ with coefficients in $\{0,1\}.$ 
\end{proof}
\begin{cor} \label{cor5.1}
If $\Lambda_{\pi}=({\bm \lambda}_1, {\bm \lambda}_2, \ldots) \vdash 
\vec{{\bm 1}}_n$ then
\begin{equation}
\sum_{{\{ \tilde{\Lambda} \in {\mathcal M}_n | \tilde{V} \subseteq V_{\pi} \}}} (-1)^{l(\tilde{\Lambda} ) - 1}(l(\tilde{\Lambda} )-1)! = 
\left\{\begin{array}{ll}
0, & \hbox{if $\Lambda_{\pi} \ne \vec{{\bm 1}}^{\intercal} _n$}\\
1, & \hbox{if $\Lambda_{\pi} = \vec{{\bm 1}}_n^{\intercal}$}
\end{array} \right.
\label{final}
\end{equation}
with $\tilde{V}, V_{\pi}$ column spans of $\tilde{\Lambda}, \Lambda_{\pi}$ respectively. 
\end{cor}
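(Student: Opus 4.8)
The plan is to obtain \eqref{final} as a special case of Proposition~\ref{17ter}, evaluated at a degenerate random vector. Take ${\bm X}=(X_1,\ldots,X_n)$ with $X_i\equiv 1$ almost surely for every $i$. Then every multivariate moment equals $\mu_{{\bm\lambda}}({\bm X})={\mathbb E}[X_1^{\lambda_1}\cdots X_n^{\lambda_n}]=1$, so by the definition of $\mu_{{\bm X}}(\tilde{\Lambda})$ in \eqref{multfunct} we get $\mu_{{\bm X}}(\tilde{\Lambda})=1$ for every $\vec{{\bm 1}}_n$-partition $\tilde{\Lambda}$. Substituting this into \eqref{eqcc}, its right-hand side collapses to exactly the alternating sum on the left-hand side of \eqref{final}, over the same index set $\{\tilde{\Lambda}\in{\mathcal M}_n\mid \tilde{V}\subseteq V_{\pi}\}$.

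It then remains to evaluate the left-hand side of \eqref{eqcc} for this choice of ${\bm X}$. Writing $\Lambda_{\pi}=({\bm\lambda}_1,\ldots,{\bm\lambda}_m)$ with $m=l(\Lambda_{\pi})$, each ${\bm\lambda}_j$ is a non-zero binary column, so ${\bm X}^{{\bm\lambda}_j}=\prod_{i:\,({\bm\lambda}_j)_i=1}X_i$ equals $1$ almost surely; hence by \eqref{gencum1},
$$
{\mathfrak K}_{{\bm\lambda}_1^{\intercal};\ldots;{\bm\lambda}_m^{\intercal}}({\bm X})={\mathcal K}(\underbrace{1,\ldots,1}_{m}).
$$
I would then invoke the elementary property of joint cumulants: ${\mathcal K}(1)={\mathbb E}[1]=1$, while a joint cumulant of order $m\ge 2$ vanishes whenever one of its arguments is almost surely constant — in the cumulant generating function $\log{\mathbb E}\big[\exp(\sum_j z_j Y_j)\big]$ an a.s.\ constant $Y_j$ contributes only a linear term $z_j\cdot\mathrm{const}$, which does not affect the mixed coefficient of $z_1\cdots z_m$ for $m\ge 2$. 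Hence the left-hand side of \eqref{eqcc} equals $1$ if $m=1$ and $0$ if $m\ge 2$.

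Finally I would match the dichotomy $m=1$ versus $m\ge 2$ with the one in \eqref{final}. Since $\Lambda_{\pi}\vdash\vec{{\bm 1}}_n$ has $n$ rows, it coincides with $\vec{{\bm 1}}_n^{\intercal}$ precisely when it consists of a single column, i.e.\ when $m=l(\Lambda_{\pi})=1$ (this is the $\vec{{\bm 1}}_n$-partition of ${\bm 1}_n$); otherwise $m\ge 2$. Combining this with the two evaluations above gives \eqref{final}. I do not foresee a real obstacle; the only ingredient beyond bookkeeping is the vanishing of higher-order joint cumulants of deterministic variables, a classical fact. If one prefers an argument internal to the combinatorics, the same conclusion follows by observing (via Theorem~\ref{mainth}) that the index set is in bijection with the interval $[\pi,{\bm 1}_n]$ of the partition lattice, which is isomorphic to $\Pi_m$; the sum then becomes $\sum_{j=1}^{m}(-1)^{j-1}(j-1)!\,S(m,j)$ with $S(m,j)$ Stirling numbers of the second kind, and this is the $m$-th cumulant of a distribution all of whose moments equal $1$ (equivalently, with cumulant generating function $t$), hence $1$ for $m=1$ and $0$ for $m\ge 2$.
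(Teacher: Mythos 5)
Your proof is correct and follows essentially the same route as the paper: the paper specializes the (formal) moment sequence to $\mu_{\bm i}\equiv 1$, so that $\kappa_{\bm i}=1$ for $|{\bm i}|=1$ and $0$ otherwise, and then reads off \eqref{final} from \eqref{eqcc}; you realize this specialization concretely via the degenerate vector $X_i\equiv 1$ and the vanishing of joint cumulants of constants of order $m\geq 2$. The lattice/Stirling-number remark at the end is a pleasant reformulation but not a genuinely different argument, since it again evaluates the cumulants of the all-ones moment sequence.
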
 
\begin{proof}
If $\Lambda_{\pi} = \vec{{\bm 1}}^{\intercal} _n$ the sum on the lhs of \eqref{final} reduces to
 $(-1)^{l(\vec{{\bm 1}}^{\intercal} _n) - 1}(l(\vec{{\bm 1}}^{\intercal} _n )-1)! = 1.$  For $\Lambda_{\pi} \ne \vec{{\bm 1}}^{\intercal} _n$ recall that, using formal power series \cite{stanley2011enumerative}, the multi-indexed sequence $\{\kappa_{\bm i}\}$ in \eqref{cummom} is the formal cumulant sequence associated with a  multi-indexed sequence $\{\mu_{\bm i}\},$ with $\mu_{\bm 0} = 1,$ and not necessarily representing the moment sequence of any multivariate random vector ${\bm X}.$ In particular, if $\mu_{\bm i} = 1$ for all ${\bm i} \in  {\mathbb N}^n_0$ then from \eqref{cummom} it follows $\kappa_{\bm i} = 1$ if $|{\bm i}|=1$ otherwise $0.$
Therefore if $\Lambda_{\pi} \ne \vec{{\bm 1}}^{\intercal} _n,$ the rhs of \eqref{eqcc} vanishes corresponding to the value of the lhs of \eqref{final} when $\mu_{\bm X}\!(\Lambda)=1.$  
\end{proof}
\begin{cor} \label{3.3}  If $\Lambda_{\pi} = ({\bm \lambda}_1, \ldots, {\bm \lambda}_m) \vdash 
\vec{{\bm 1}}_n$ then
\begin{equation}
{\mathfrak K}_{{\bm \lambda}^{\intercal}_1; \ldots; {\bm \lambda}^{\intercal}_m}({\bm X})= \sum_{\Lambda^{\star} \in {\mathcal C}_{\pi}} 
\kappa_{\bm X}(\Lambda^{\star})
\label{gcum}
\end{equation}
where  ${\mathcal C}_{\pi} = \{\Lambda^{\star} \vdash \vec{{\bm 1}}_n | V_{\pi^{\star}} \cap V_{\pi} = V_{{\bm 1}_n}  \}$ with  $V_{\pi^{\star}}, V_{\pi}$ column spans of $\Lambda^{\star},\Lambda_{\pi}$ respectively. 
\end{cor}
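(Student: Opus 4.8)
The plan is to chain together Propositions~\ref{17bis} and~\ref{17ter}, then use Corollary~\ref{cor5.1} together with Theorem~\ref{mainth} to evaluate the resulting coefficients. I would start from the expansion \eqref{eqcc} of Proposition~\ref{17ter},
\[
{\mathfrak K}_{{\bm \lambda}^{\intercal}_1; \ldots; {\bm \lambda}^{\intercal}_m}({\bm X}) = \sum_{\{ \tilde{\Lambda} \in {\mathcal M}_n \mid \tilde{V} \subseteq V_{\pi} \}}  (-1)^{l(\tilde{\Lambda}) - 1} (l(\tilde{\Lambda})-1)!\,\mu_{\bm X}\!(\tilde{\Lambda}),
\]
and substitute into each factor $\mu_{\bm X}\!(\tilde{\Lambda})$ the moment-to-cumulant identity of Proposition~\ref{17bis}: writing $\pi^{\star}$ for the partition with $\Lambda_{\pi^{\star}}=\Lambda^{\star}$ and $V_{\pi^{\star}}$ for its column span, this gives $\mu_{\bm X}\!(\tilde{\Lambda}) = \sum_{\{\Lambda^{\star}\in{\mathcal M}_n \mid \tilde{V} \subseteq V_{\pi^{\star}}\}} \kappa_{\bm X}(\Lambda^{\star})$.

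Next I would interchange the two finite summations and collect the coefficient of each $\kappa_{\bm X}(\Lambda^{\star})$. Since the pair of conditions $\tilde{V} \subseteq V_{\pi}$ and $\tilde{V} \subseteq V_{\pi^{\star}}$ is equivalent to $\tilde{V} \subseteq V_{\pi} \cap V_{\pi^{\star}}$, that coefficient is
\[
\sum_{\{ \tilde{\Lambda} \in {\mathcal M}_n \mid \tilde{V} \subseteq V_{\pi}\cap V_{\pi^{\star}} \}}  (-1)^{l(\tilde{\Lambda}) - 1} (l(\tilde{\Lambda})-1)!.
\]
By Theorem~\ref{mainth} the intersection $V_{\pi} \cap V_{\pi^{\star}}$ equals $V_{\rho}$, the column span of the $\vec{{\bm 1}}_n$-partition $\Lambda_{\rho}$ attached to $\rho = \pi \lor \pi^{\star}$, so the displayed sum is exactly the left-hand side of \eqref{final} for $\Lambda_{\rho}$. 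Corollary~\ref{cor5.1} then shows this coefficient equals $1$ when $\Lambda_{\rho} = \vec{{\bm 1}}_n^{\intercal}$ and $0$ otherwise.

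Finally, $\Lambda_{\rho} = \vec{{\bm 1}}_n^{\intercal}$ means $V_{\pi} \cap V_{\pi^{\star}} = V_{{\bm 1}_n}$, which is precisely the condition $\Lambda^{\star} \in {\mathcal C}_{\pi}$. Hence only the $\Lambda^{\star} \in {\mathcal C}_{\pi}$ survive, each with coefficient $1$, and \eqref{gcum} follows. The one step I expect to need care is the identification of $V_{\pi} \cap V_{\pi^{\star}}$ with the column span of a genuine $\vec{{\bm 1}}_n$-partition, so that Corollary~\ref{cor5.1} applies verbatim; this is supplied by Theorem~\ref{mainth} and the observation preceding it that the greatest lower bound of column spans of $\vec{{\bm 1}}_n$-partitions is their set-theoretic intersection. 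Everything else is routine manipulation of finite sums.
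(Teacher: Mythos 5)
Your proposal is correct and follows essentially the same route as the paper's own proof: expand via Proposition~\ref{17ter}, substitute Proposition~\ref{17bis} into each $\mu_{\bm X}(\tilde{\Lambda})$, interchange the finite sums, and evaluate the resulting coefficient $\sum_{\{\tilde{\Lambda}\,\mid\,\tilde{V}\subseteq V_{\pi}\cap V_{\pi^{\star}}\}}(-1)^{l(\tilde{\Lambda})-1}(l(\tilde{\Lambda})-1)!$ by Corollary~\ref{cor5.1}. Your explicit appeal to Theorem~\ref{mainth} to identify $V_{\pi}\cap V_{\pi^{\star}}$ as the column span of the $\vec{{\bm 1}}_n$-partition of $\pi\lor\pi^{\star}$ is a small but welcome clarification of a step the paper leaves implicit.
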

\begin{proof}
From Proposition \ref{17bis} and  \ref{17ter}, we have
\begin{equation}
{\mathfrak K}_{{\bm \lambda}^{\intercal}_1; \ldots; {\bm \lambda}^{\intercal}_m}({\bm X}) =  \sum_{\{ \tilde{\Lambda} \in {\mathcal M}_n |  \tilde{V} \subseteq V_{\pi} \}} (-1)^{l(\tilde{\Lambda})-1}(l(\tilde{\Lambda})-1)! \sum_{\{ {\Lambda}^{\star} \in {\mathcal M}_n |  \tilde{V} \subseteq V^{\star} \}} \kappa_{\bm X}({\Lambda}^{\star})
\label{first}
\end{equation}
with  $V^{\star}, \tilde{V}$ column spans of $\Lambda^{\star}, \tilde{\Lambda}$ respectively. Grouping the outer sum in \eqref{first} according to the length of $\tilde{\Lambda}, $ the rhs of \eqref{first} becomes 
\begin{equation}
\sum_{{\Lambda}^{\star} \vdash \vec{{\bm 1}}^{\intercal}_n} \kappa_{\bm X}({\Lambda}^{\star}) -\sum_{\scriptscriptstyle{\tilde{\Lambda} \in {\mathcal I}_2}} \sum_{\scriptscriptstyle{{\Lambda}^{\star} \in {\mathcal S}_{2,\tilde{V}}}} \kappa_{\bm X}({\Lambda}^{\star}) +   \cdots + (-1)^{l-1}(l-1)!  \sum_{\scriptscriptstyle{\tilde{\Lambda} \in {\mathcal I}_l}} \sum_{\scriptscriptstyle{{\Lambda}^{\star} \in {\mathcal S}_{l,\tilde{V}}}} \kappa_{\bm X}({\Lambda}^{\star})
\label{terzo}
\end{equation}
where  ${\mathcal I}_i = \{ \tilde{\Lambda} \in {\mathcal M}_n | l(\tilde{\Lambda})=i \, \hbox{\rm and} \, \tilde{V} \subseteq V_{\pi}\}, {\mathcal S}_{i,\tilde{V}} = \{ {\Lambda}^{\star} \in {\mathcal M}_n |  \tilde{V} \subseteq V^{\star},\}$ with 
$\tilde{V}$ column span of $\tilde{\Lambda}$  for $i=2,\ldots,l$ with $l \leq m.$  Collecting together common $\kappa_{\bm X}({\Lambda}^{\star})$,  \eqref{terzo} returns
\begin{equation}
{\mathfrak K}_{{\bm \lambda}^{\intercal}_1; \ldots; {\bm \lambda}^{\intercal}_m}({\bm X})  =  \sum_{\Lambda^{\star} \vdash \vec{{\bm 1}}^{\intercal}_n} \kappa_{\bm X}(\Lambda^{\star}) \sum_{\{\tilde{\Lambda} \in {\mathcal M}_n | \tilde{V} \subseteq V_{\pi} \cap V^{\star} \}} (-1)^{l(\tilde{\Lambda}) - 1}(l(\tilde{\Lambda})-1)!
\label{quinto}
\end{equation}
with $\tilde{V}, V_{\pi}$ and $V^{\star}$ column spans of $\tilde{\Lambda},\Lambda_{\pi}$ and $\Lambda^{\star}$ respectively. 
 From Corollary \ref{cor5.1} the inner sum in \eqref{quinto} is always zero, except in the case  $V^{\star} \cap V_{{\pi}}= V_{{\bm 1}_n}.$ Indeed, under this condition, we have $\tilde{V} =  V_{{\bm 1}_n}$ and the inner sum in \eqref{quinto} to the single term $\tilde{\Lambda}=\vec{{\bm 1}}^{\intercal}_n$ yields the value 
$1$ by Corollary \ref{cor5.1}.  
\end{proof}
{\bf Acknowledgements.}
{\it Funding:} the research of E.D. was partially supported by the
MIUR-PRIN 2022 project “Non-Markovian dynamics and non-local equations”,
202277N5H9. 


\end{document}